\begin{document}
\def\Diff{\text{Diff}}
\def\Max{\text{max}}
\def\R{\mathbb R}
\def\N{\mathbb N}
\def\Z{\mathbb Z}
\def\Q{\mathbb Q}
\def\a{{\underline a}}
\def\b{{\underline b}}
\def\c{{\underline c}}
\def\Log{\text{log}}
\def\loc{\text{loc}}
\def\inta{\text{int }}
\def\det{\text{det}}
\def\exp{\text{exp}}
\def\Re{\text{Re}}
\def\lip{\text{Lip}}
\def\leb{\text{Leb}}
\def\dom{\text{Dom}}
\def\diam{\text{diam}\:}
\def\supp{\text{supp}\:}
\newcommand{\ovfork}{{\overline{\pitchfork}}}
\newcommand{\ovforki}{{\overline{\pitchfork}_{I}}}
\newcommand{\Tfork}{{\cap\!\!\!\!^\mathrm{T}}}
\newcommand{\whforki}{{\widehat{\pitchfork}_{I}}}
\newcommand{\marginal}[1]{\marginpar{{\scriptsize {#1}}}}

\theoremstyle{plain}
\newtheorem{theo}{\bf Theorem}[section]
\newtheorem{lemm}[theo]{\bf Lemma}
\newtheorem{conj}[theo]{\bf Conjecture}
\newtheorem{ques}[theo]{\bf Question}
\newtheorem{prb}[theo]{\bf Problem}
\newtheorem{sublemm}[theo]{\bf Sublemma}
\newtheorem{IH}[theo]{\bf Extra induction hypothesis}
\newtheorem{prop}[theo]{\bf Proposition}%[section]
\newtheorem{coro}[theo]{\bf Corollary}%[section]
\newtheorem{Property}[theo]{\bf Property}%[section]
\newtheorem{Claim}[theo]{\bf Claim}
\theoremstyle{remark}
\newtheorem{rema}[theo]{\bf Remark}
\newtheorem{important rema}[theo]{\bf Important remark}
\newtheorem{remas}[theo]{\bf Remarks}
\newtheorem{fact}[theo]{\bf Fact}

\newtheorem{exem}[theo]{\bf Example}
\newtheorem{Examples}[theo]{\bf Examples}
\newtheorem{defi}[theo]{\bf Definition}

%\newcommand\relatif{{\rm \rlap Z\kern 3pt Z}}
%\makeatletter
%\renewcommand\theequation{\thesection.\arabic{equation}}
%\@addtoreset{equation}{section}
%\makeatother

\title{Nested Cantor sets}

\author{Pierre Berger~\footnote{berger@math.univ-paris13.fr, LAGA
    Université Paris 13, partially financed by the Balzan prize of J. Palis}{ } and Carlos Gustavo Moreira~\footnote{gugu@impa.br, Instituto de Matem\'atica Pura e Aplicada; partially financed by the Balzan prize of J. Palis and by CNPq}{} }

\date{\today}

\maketitle
\begin{abstract} 
We give sufficient conditions for two Cantor sets of the line  to be nested for a positive set of translation parameters.
This problem occurs in diophantine approximations. It also occurs as a toy  model of the parameter selection for non-uniformly hyperbolic attractors of the plane.   For natural Cantors sets, 
we show that this condition is optimal.  
\end{abstract}
\section*{Introduction}\label{intro}

One dimensional Cantor sets of positive measure appear frequently in dynamical systems and diophantine geometry. 
%One dimensional Cantor sets of positive measure appear frequently in dynamical systems. 

 For instance, if  a quadratic map $f(x)=x^2+a$  has an absolutely continuous invariant measure (acim) with Lyapunov exponent $\lambda>0$, then for  $C>0 $ small enough, $\lambda' \in (0,\lambda)$, the set $\tilde K$ of points $x$ such that:

\begin{equation}\tag{$\star$} \label{Explyap}
 \|D_xf^n\| \ge  {C\lambda'^n},\quad \forall n\ge 0.
\end{equation}
is a Cantor set of positive Lebesgue measure, called \emph{Pesin set}.  Collet-Eckmann property is that the critical value belongs to such a $\tilde K$ (for a certain  $C,\lambda'$). Actually the set of Collet-Eckmann parameters $a$ is of positive measure \cite{BC1, Yoc}. A rough idea of the proof is the following: the critical value $a$ ``moves faster'' than the set $\tilde K$.

In higher dimension, for instance in the study of the Hénon map \cite{berhen}, a non-uniformly hyperbolic attractor appears when a  Cantor set $K$ is included in a Cantor set similar to  $\tilde K$. A main issue is to find such non-uniformly hyperbolic attractors with a Hausdorff dimension not too close to one,  for a positive set of parameters of generic family of maps of surface.
 As a toy model for the parameter selection, we can assume that $K$ and $\tilde K$ do not depend on the parameter.  This leads to the following question:

\begin{prb}[\cite{berhen}]\label{question}
Under which hypothesis on a  Cantor set $K$ and a Lebesgue positive Cantor set $\tilde K$, the set of parameters $t\in \R$ such that $K+t$ is included in $\tilde K$ has a positive Lebesgue measure?
\end{prb}
 
We will give a sufficient condition for this problem, based on the following condition on $\tilde K$.

\begin{defi} For $0<p< 1$, a compact set $\tilde K = [a,a+\diam\, \tilde K]\setminus \sqcup_n (a_n,b_n)$, where the intervals $(a_n,b_n)$ are the connected components of $[a,a+\diam\, \tilde K]\setminus \tilde K$ satisfies condition $(\mathcal C_p)$ if:
\begin{equation}\tag{$\mathcal C_p$}\sum_n(b_n-a_n)^p<\infty.\end{equation} 
We define
\[P(\tilde K):= \inf\{p>0:\; \tilde K \text{ satisfies condition }(\mathcal C_p)\}\]
\end{defi}

Let us formulate a few remarks on condition ($\mathcal C_p$).
\begin{rema}\label{exprediff} If the complement of $\tilde K$ is expressed as an union of intervals which are not necessarily connected components of $[a,a+\diam\, \tilde K]\setminus \tilde K$:
\[\tilde K = [a, a+\diam\, \tilde K]\setminus \cup_n (a_n,b_n),\]
 such that $\sum_n(b_n-a_n)^p<\infty$, then $\tilde K$ satisfies condition ($\mathcal C_p$), with $0<p<1$. Indeed, for every positive numbers  $(l_i)_i$ , it holds  $\sum_i l_i^p\ge (\sum_i l_i)^p$, hence the sum  $\sum_n(b_n-a_n)^p$ is smaller if we group the intervals in the same components.
\end{rema}
The inequality $\sum_i l_i^p\ge (\sum_i l_i)^p$ also implies the following
\begin{rema} 
If $\tilde K_1$ and $\tilde K_2$ are two subsets of $[0,1]$ satisfying ($\mathcal C_p$), then 
 $\tilde K_1\cap \tilde K_2$ satisfies ($\mathcal C_p$). 
\end{rema}

 A consequence of the main result is the following:
\begin{prop}\label{coro1}Let $K$ be a regular Cantor set of Hausdorff dimension $d$ of $\mathbb R$ and let $\tilde K$ be a Cantor set of positive Lebesgue measure satisfying condition ($\mathcal C_p$). If $d+p<1$ then for $\lambda>0$ sufficiently small, the following set has positive Lebesgue measure:
\[\{t\in \mathbb R:\; t+\lambda K\subset   \tilde K\}.\]
\end{prop}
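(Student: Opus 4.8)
The plan is to pass to complements and estimate the measure of the set of \emph{bad} parameters. Normalize so that $K\subset[0,1]$ and write $\tilde K=[a,a+\diam\tilde K]\setminus\sqcup_n(a_n,b_n)$, $\ell_n:=b_n-a_n$, so that $(\mathcal C_p)$ gives $S:=\sum_n\ell_n^p<\infty$ while $\sum_n\ell_n=\diam\tilde K-\leb(\tilde K)$. For $t$ in the window $W:=[a,\,a+\diam\tilde K-\lambda]$ one has $t+\lambda K\subset[a,a+\diam\tilde K]$ automatically, so $\{t\in W:\ t+\lambda K\subset\tilde K\}=W\setminus B$, where $B=\bigcup_nB_n$ and $B_n=\{t:\ (t+\lambda K)\cap(a_n,b_n)\neq\emptyset\}=(a_n,b_n)-\lambda K$ (Minkowski difference). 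It therefore suffices to show $\leb(B)<\leb(W)$ for small $\lambda$; more precisely I would prove $\leb(B)\le(\diam\tilde K-\leb(\tilde K))+o(1)$ as $\lambda\to0$, whence $\leb(W\setminus B)\ge\leb(\tilde K)-o(1)>0$. The point is that in the limit $\lambda\to0$ the condition degenerates to $t\in\tilde K$, so $B$ should barely exceed the fixed total gap length.

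The heart of the argument is a per-gap estimate of $\leb(B_n)$ using the size of $K$. Fix $d'$ with $d<d'<1-p$ (possible since $d+p<1$); since $K$ is regular its box dimension equals $d$, so there is $C$ with $N(\delta)\le C\delta^{-d'}$ for $0<\delta\le1$, where $N(\delta)$ is the least number of length-$\delta$ intervals covering $K$. For a \emph{small} gap, $\ell_n\le\lambda$, I would cover $K$ at scale $\delta=\ell_n/\lambda\le1$; scaling by $\lambda$ covers $\lambda K$ by $N(\delta)$ intervals of length $\ell_n$, and each contributes to $B_n$ an interval of length $2\ell_n$, so
\[
\leb(B_n)\le 2\ell_n\,N(\ell_n/\lambda)\le 2C\,\lambda^{d'}\ell_n^{\,1-d'}.
\]
For a \emph{large} gap, $\ell_n>\lambda$, the scaled set $\lambda K$ has diameter $\lambda<\ell_n$, so simply $B_n\subset(a_n-\lambda,b_n)$ and $\leb(B_n)\le\ell_n+\lambda$.

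Summing is then controlled exactly by the budget $d'+p<1$. For small gaps, $\ell_n^{1-d'}=\ell_n^p\,\ell_n^{1-d'-p}\le\ell_n^p\lambda^{1-d'-p}$, hence $\sum_{\ell_n\le\lambda}\leb(B_n)\le 2C\lambda^{d'}\lambda^{1-d'-p}S=2CS\,\lambda^{1-p}$. For large gaps, $\#\{n:\ell_n>\lambda\}\le S\lambda^{-p}$, so $\sum_{\ell_n>\lambda}\leb(B_n)\le\sum_n\ell_n+\lambda\cdot S\lambda^{-p}=(\diam\tilde K-\leb(\tilde K))+S\lambda^{1-p}$. Adding and using subadditivity, $\leb(B)\le\sum_n\leb(B_n)\le(\diam\tilde K-\leb(\tilde K))+(2C+1)S\,\lambda^{1-p}$; since $\leb(W)=\diam\tilde K-\lambda$ this yields $\leb(W\setminus B)\ge\leb(\tilde K)-\lambda-(2C+1)S\lambda^{1-p}$, which is positive once $\lambda$ is small enough. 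The main obstacle is the small-gap estimate: one must match the covering scale of $K$ to each gap length and then sum, and it is precisely $d+p<1$ (through the choice $d<d'<1-p$ and the resulting exponent $1-d'-p>0$) that forces the excess to vanish as $\lambda\to0$. Regularity of $K$ enters only to supply the box-counting bound $N(\delta)\lesssim\delta^{-d'}$.
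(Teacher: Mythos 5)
Your proof is correct and is essentially the paper's own argument: the paper proves Theorem \ref{theo1} via exactly your decomposition of the bad parameter set into the Minkowski differences $(a_n,b_n)-K$ (Fact \ref{lefait}) together with the same two per-gap estimates (Proposition \ref{propo1}: a covering bound for gaps smaller than $\diam K$, the trivial bound for larger ones), and then deduces Proposition \ref{coro1} by applying this to $\lambda K$ and letting $\lambda\to 0$. Your only deviations are technical: you use a box-counting bound at an auxiliary exponent $d'\in(d,1-p)$ where the paper uses the $d$-box fuzzy measure $C_K$ (finite by Palis--Takens), and your summation yields an explicit $O(\lambda^{1-p})$ excess where the paper uses a soft truncation argument.
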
  

Main Theorem \ref{theo1} gives a sufficient condition to answer to Problem \ref{question}. It implies the above Proposition. It implies also the following results on the in diophantine approximation geometry. 

Given a real number $\alpha$ and a real number $d\ge 2$, we say that $\alpha$ is {\it diophantine of order $d$} if there is a real constant $c>0$ such that $|\alpha-\frac{p}{q}|>\frac{c}{q^d}$, for all integers $p, q$ with $q>0$. Let $D_d$ be the set of such numbers $\alpha$. We say that $\alpha$ is {\it diophantine} if $\alpha$ is diophantine of order $d$ for some $d\ge 2$. Let $D$ be the set of the diophantine numbers. Many metric results about diophantine numbers are well-known. For instance, if $d>2$ then $D_d$ has full Lebesgue measure, and $\mathbb R\setminus D_d$ has Hausdorff dimension $2/d$ (see for instance \cite{Ja2}). As a consequence, the set of Liouville numbers $L=\mathbb R\setminus(\mathbb Q \cup D)$ has zero Lebesgue measure (but is a residual set in Baire's sense). On the other hand, $D_2$ has zero Lebesgue measure but Hausdorff dimension $1$ (see \cite{Ja1} or \cite{CF}). And, by the classical Dirichlet's theorem on diophantine approximations, for $d<2$ the set $D_d$ is empty. 
%We recall that given a real number $\alpha$ and a real number $d\ge 2$, we say that $\alpha$ is {\it diophantine of order $d$} if there is a real constant $c>0$ such that $|\alpha-\frac{p}{q}|>\frac{c}{q^d}$, for all integers $p, q$ with $q>0$. Let $D_d$ be the set of such numbers $\alpha$. We say that $\alpha$ is {\it diophantine} if $\alpha$ is diophantine of order $d$ for some $d\ge 2$. Let $D$ be the set of the diophantine numbers. We remark that $D$ is an increasing, countable union of Cantor sets of positive measure.
 Here is an application of the main result. 

\begin{theo}\label{dio} If $K$ is a regular Cantor set of Hausdorff dimension $s$ with $s<1-\frac2{d}$, then, for almost every $t\in \mathbb R$, $K+t\subset D_d\subset D$.\end{theo}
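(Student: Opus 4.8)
Write $E:=\mathbb R\setminus D_d$. Since $D_d\subset D$ by definition, it suffices to prove that for almost every $t$ one has $K+t\subset D_d$, that is $(K+t)\cap E=\emptyset$. Now $(K+t)\cap E\neq\emptyset$ means $x+t\in E$ for some $x\in K$, i.e.\ $t\in E-x$; hence the set of \emph{bad} parameters is exactly the difference set
\[
B:=\{t:\ (K+t)\not\subset D_d\}=E-K=\{e-x:\ e\in E,\ x\in K\}.
\]
The whole problem therefore reduces to showing that $B$ has zero Lebesgue measure, and note that this will give the stronger \emph{full measure} conclusion directly, rather than the positive measure produced by the main theorem.

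The two inputs I would use are the following. First, the Jarn\'\i k--Besicovitch estimate quoted in the introduction, of which I only need the easy covering half, namely $\dim_H E\le 2/d$. Second, the fact that a regular Cantor set $K$ has coinciding Hausdorff and upper box dimensions, so that $\overline{\dim}_B K=s$; concretely, for every $v>s$ there are $C>0$ and $\varepsilon_0>0$ with $N(\varepsilon)\le C\varepsilon^{-v}$ for $\varepsilon\le\varepsilon_0$, where $N(\varepsilon)$ is the number of intervals of length $\varepsilon$ needed to cover $K$. Because $s<1-2/d$, I may then fix exponents $u>2/d$ and $v>s$ with $u+v<1$.

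I would then run a direct covering estimate. Localising $t$ to a bounded interval and using that $K$ is compact, the relevant part of $E$ lies in a fixed bounded interval and still satisfies $\dim_H E\le 2/d<u$; hence $\mathcal H^u(E)=0$, so for every $\eta>0$ there is a cover of this part of $E$ by intervals $(I_i)_i$ of arbitrarily small mesh (in particular $|I_i|\le\varepsilon_0$) with $\sum_i|I_i|^u<\eta$. For each $i$ cover $K$ by at most $C|I_i|^{-v}$ intervals $(J_{ij})_j$ of length $|I_i|$. Then $B$ is covered, on the localised window, by $\bigcup_{i,j}(I_i-J_{ij})$, each $I_i-J_{ij}$ being an interval of length $2|I_i|$, so that
\[
\mathrm{Leb}(B)\le\sum_i C|I_i|^{-v}\cdot 2|I_i|=2C\sum_i|I_i|^{\,1-v}\le 2C\sum_i|I_i|^{u}<2C\eta,
\]
where I used $1-v>u$ and $|I_i|\le 1$. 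Letting $\eta\to0$ gives measure zero on the window, and a countable union over windows yields $\mathrm{Leb}(B)=0$, which is the claim.

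The computation is really the statement $\dim_H(E-K)\le\dim_H E+\overline{\dim}_B K=2/d+s<1$; I would present it through the self-contained covering above rather than invoking the product inequality $\dim_H(E\times K)\le\dim_H E+\dim_P K$ together with Lipschitz invariance of $(e,x)\mapsto e-x$, although these give the same bound. I do not expect a genuine obstacle here: the one nontrivial ingredient, the upper bound $\dim_H(\mathbb R\setminus D_d)\le 2/d$, is classical, and the only point requiring care is confirming that ``regular Cantor set'' forces the upper box dimension of $K$ to equal $s$.
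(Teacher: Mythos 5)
Your proof is correct, but it takes a genuinely different route from the paper's. The paper proves Theorem \ref{dio} as a direct application of its main Theorem \ref{theo1}: it observes that $D_d$ contains the explicit set $X=[a-M,b+M]\setminus\bigcup_{q\ge q_0}\bigcup_{p}\bigl(\frac pq-\frac1{q^d},\frac pq+\frac1{q^d}\bigr)$, checks that for $q_0$ large every removed interval is shorter than $\diam K$, and feeds the gap lengths $2/q^d$ into the quantitative lower bound of Theorem \ref{theo1}; the resulting measure of good parameters in $[-M,M]$ is at least $2M-\frac{16MC_K}{((1-s)d-2)q_0^{(1-s)d-2}}$, which tends to $2M$ as $q_0\to\infty$ because $(1-s)d>2$. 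You bypass the main theorem entirely: you identify the bad parameter set as the difference set $E-K$ with $E=\mathbb R\setminus D_d$, quote the covering half of the Jarn\'{\i}k--Besicovitch theorem ($\dim_H E\le 2/d$, which the paper also cites in its introduction) as a black box, and run a covering count amounting to $\dim_H(E-K)\le\dim_H E+\overline{\dim}_B K=2/d+s<1$, hence $\leb(E-K)=0$. The underlying computations are parallel: your step bounding $\leb(I_i-K)$ by $2C|I_i|^{1-v}$ is exactly the paper's Proposition \ref{propo1}, and your covering bound $N(\varepsilon)\le C\varepsilon^{-v}$ is the finiteness of the box fuzzy measure from Theorem \ref{PT}. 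But the structures differ: the paper's route showcases Theorem \ref{theo1} and stays fully explicit (no Hausdorff-measure formalism, and an effective rate in $q_0$), while yours is self-contained modulo the Jarn\'{\i}k bound and yields the full-measure conclusion in one stroke as a zero-measure statement about the bad set, rather than by exhausting $[-M,M]$ as $q_0\to\infty$. The delicate points in your write-up --- small mesh for the $\mathcal H^u$-cover, localisation to windows, and the fact that regularity of $K$ forces the upper box dimension to equal $s$ with uniform covering constants --- are all correctly handled, the last one being precisely the paper's Theorem \ref{PT}.
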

We can apply this result to the set of {\it badly approximable} real numbers, that is those which are diophantine of order $2$. The set $B=D_2$ of badly approximable real numbers is the countable union for all integers $j$ and all positive integers $k$ of the regular Cantor sets $F_k+\{j\}$, where $F_k\subset [0,1]$ is the regular Cantor set (defined by a suitable restriction of the Gauss map $g:(0,1)\to [0,1), g(x)=1/x-\lfloor 1/x\rfloor$) given by  $F_k=\{\alpha=[0;a_1,a_2,...];1\le a_j\le k, \forall j\ge 1\}$ (where $[0;a_1,a_2,...]$ is the usual representation of $\alpha$ by continued fractions; see \cite{CF}).
\begin{coro}For almost every $t\in \mathbb R$, $B+t\subset D$.\end{coro}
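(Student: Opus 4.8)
The plan is to deduce this corollary from Theorem~\ref{dio} by decomposing $B$ into the countable family of regular Cantor sets provided in the statement and applying the theorem to each piece separately. Concretely, write $B=D_2=\bigcup_{j\in\Z}\bigcup_{k\ge 1}(F_k+j)$. For each fixed pair $(j,k)$ I would observe that $F_k+j$ is again a regular Cantor set with the same Hausdorff dimension $s_k:=\dim_H F_k$ as $F_k$, since translating by the integer $j$ is an isometry and hence preserves both regularity and Hausdorff dimension.

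The crucial input beyond Theorem~\ref{dio} is that $s_k<1$ strictly for every finite $k$. This is a classical feature of the Gauss-map Cantor sets $F_k$ of continued fractions with partial quotients bounded by $k$: at each scale a definite proportion of each cylinder is removed, so the invariant set is genuinely thin and its dimension is bounded away from $1$ (one has $s_k\nearrow 1$ as $k\to\infty$, but $s_k<1$ for each $k$). Granting $s_k<1$, I can choose a real $d=d_k\ge 2$ large enough that
\[
s_k<1-\frac{2}{d_k},
\qquad\text{i.e. }\quad d_k>\frac{2}{1-s_k}.
\]
Theorem~\ref{dio} applied to the regular Cantor set $F_k+j$ of dimension $s_k$ then yields a Lebesgue-null set $N_{j,k}\subset\R$ such that for every $t\in\R\setminus N_{j,k}$ one has $(F_k+j)+t\subset D_{d_k}\subset D$.

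Finally I would assemble the pieces: set $N:=\bigcup_{j\in\Z}\bigcup_{k\ge 1}N_{j,k}$. As a countable union of null sets, $N$ is Lebesgue-null. For every $t\in\R\setminus N$ and every pair $(j,k)$ we have $(F_k+j)+t\subset D$, hence
\[
B+t=\Big(\bigcup_{j,k}(F_k+j)\Big)+t=\bigcup_{j,k}\big((F_k+j)+t\big)\subset D,
\]
which is the claim for almost every $t$.

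The substance of the argument is entirely packaged in Theorem~\ref{dio}; the only genuine obstacle specific to this corollary is verifying that each $F_k$ has Hausdorff dimension strictly less than $1$, which is what licenses the choice of $d_k$. Everything else is bookkeeping: the integer translation invariance of regularity and dimension, and the fact that a countable union of null sets is null.
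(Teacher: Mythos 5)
Your proof is correct and follows exactly the route the paper intends: the corollary is stated immediately after the decomposition $B=\bigcup_{j,k}(F_k+j)$ precisely so that it follows from Theorem~\ref{dio} applied to each regular Cantor set $F_k+j$ (using $\dim_H F_k<1$ to pick a suitable $d_k$) together with the fact that a countable union of null sets is null. Your write-up simply makes explicit the argument the paper leaves implicit.
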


We define in section \ref{Notations and definitions} the different notions used by Proposition \ref{coro1} and Theorem \ref{theo1}. 
In section \ref{Main result and its applications}, we state the main theorem and prove Proposition \ref{coro1} and Theorem \ref{dio}. Then we apply this Proposition to many examples which are toy models for the parameter selection of non-uniformly hyperbolic attractors.
In section \ref{prooftheo1}, Theorem \ref{theo1} is proven. In section \ref{Discussion}, we discuss on the optimality of the result, thanks to examples and counterexamples. The examples optimality are rather natural whereas the counterexamples are rather artificial.
In section \ref{Explanation of the toy  model of the parameter selection for non-uniformly hyperbolic attractors}, we will apply these results to example from non uniformly hyperbolic dynamics. They serve as a toy model for parameter selection of non-uniformly hyperbolic attractor of surface diffeomorphisms. These toy models are then explain, in relation to a conjecture of Hénon.

\section{Notations and definitions}\label{Notations and definitions}
\subsection{Operations on sets}
Given two subsets $E,F\subset \R$, the arithmetic sums $E+F$ and $E-F$ stand for the following subsets of $\R$:
\[ E+F=\{e+f:\; e\in E,\; f\in F\},\quad E-F=\{e-f:\; e\in E,\; f\in F\}.\]

We denote also by $-E$ the set $\{0\}-E$.

The operations $+$ and $-$ are associative.  Thus, for any three subsets $E,F,G\subset \R$, we can denote by $E+F+G$ the set:
\[E+F+G=E+(F+G)=(E+F)+G.\]

For $U\in \mathbb R$, let $U\cdot E:=\{U\cdot e:\; e\in E\}$. However $2\cdot E\subsetneq E+E$ if $E$ is not trivial.

\subsection{Regular Cantor sets}  
\begin{defi}[Regular Cantor set]
We recall that $K$ is a {\it $C^k$-regular Cantor set}, $k\ge 1$, if:

\begin{itemize}

\item[i)] there are disjoint compact intervals $I_1,I_2,\dots,I_r$ such that $K\subset I_1 \cup \cdots\cup I_r$ and the boundary of each $I_j$ is contained in $K$;

\item[ii)] there is a $C^k$ expanding map $\psi$ defined in a neighbourhood of $I_1\cup I_2\cup\cdots\cup I_r$ such that $\psi(I_j)$ is the convex hull of a finite union of some intervals $I_s$ satisfying:

\begin{itemize}

\item[ii.1)] for each $j$, $1\le j\le r$ and $n$ sufficiently big, $\psi^n(K\cap I_j)=K$;

\item[ii.2)] $K=\bigcap\limits_{n\in\mathbb N} \psi^{-n}(I_1\cup I_2\cup\cdots\cup I_r)$.

\end{itemize}
\end{itemize}
\end{defi}
There are two basic  concepts of dimension for Cantor sets:
\begin{defi}[Box dimension and box fuzzy measure]
 The \emph{box dimension} $d$ of a compact set $K$ is the infimum of $s\ge 0$ such that the following is zero:
 \[\limsup_{\epsilon\rightarrow 0}\; \inf\{N\epsilon^{s} :\;(U_i)_{i=1}^N\;\text{is a finite open cover of }K\text{ by}\text{ intervals of diameter }\epsilon\}.\] 
We denote by $C_K$ the following supremum, with $d$ the box dimension:
\[ C_K:= \sup_{0< \epsilon\le \diam \, K}\; \inf\{N\epsilon^{d} :\;(U_i)_{i=1}^N\;\text{is a finite open cover of }K\text{ by intervals of diameter }\epsilon\}.\] 
The map $K\mapsto C_K$ is a fuzzy measure, but not a measure. We call it the \emph{$d$-box fuzzy measure}. 
\end{defi}
\begin{rema}For every compact set $K$ of box dimension $d$  and $\lambda>0$, it holds:
\[ \diam(\lambda K)= \lambda \diam K\quad \text{and} \quad C_{\lambda K} = \lambda^d C_K.\] 
\end{rema}
The box dimension is at least equal to the Hausdorff dimension:
\begin{defi}[Hausdorff dimension]
 The \emph{Hausdorff dimension} $d$ of a compact set $K$ is the infimum of $s$ such that the following is zero:
 \[\limsup_{\epsilon\rightarrow 0}\; \inf\{\sum_{i} \diam(U_i)^s :\;(U_i)_{i}\;\text{is an open cover of }K\text{ by intervals of diameter}\le \epsilon\}.\]
  The value of the above limit at $s=d$ is denoted by $m_d(K)$. The map $K\mapsto m_d(K)$ is a measure, called the Hausdorff $d$-dimensional measure. 
\end{defi}
In the case of regular Cantor set, these two definitions coincide from the following.
\begin{theo}[Palis-Takens  Prop. 3 p. 72 \cite{PT}]\label{PT}
If $K$ is a regular Cantor set of class $C^{1+\alpha}$, the Hausdorff dimension $d$ of $K$ is equal to the box dimension of $K$. Moreover, the box fuzzy measure $C_K$ is finite. 
\end{theo}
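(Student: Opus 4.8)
The plan is to exploit the one structural feature that drives the whole statement: bounded distortion of the iterates of the expanding map $\psi$. For an admissible word $w=(a_0,\dots,a_{n-1})$, write $I_w$ for the level-$n$ piece, i.e.\ the convex hull of the points of $K$ whose itinerary under $\psi$ begins with $w$. These pieces form a nested family covering $K$, indexed by a subshift of finite type whose transition rule is prescribed by the condition $\psi(I_{a})\supset I_{a'}$, and hypothesis (ii.1) guarantees that this subshift is eventually transitive, which is what makes all the constants below uniform in $w$ and $n$.

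First I would establish bounded distortion: there is $C_0\ge 1$, depending only on $\psi$, on the $\alpha$-H\"older constant of $\log|D\psi|$ and on the expansion rate, such that for every admissible $w$ of length $n$ and all $x,y\in I_w$,
\[ C_0^{-1}\le \frac{|D\psi^n(x)|}{|D\psi^n(y)|}\le C_0. \]
This is the standard telescoping estimate: expand $\log|D\psi^n(x)|-\log|D\psi^n(y)|$ as a sum over $k$ of $\log|D\psi(\psi^k x)|-\log|D\psi(\psi^k y)|$, bound each term by the H\"older norm times $|\psi^k x-\psi^k y|^\alpha$, and sum the geometric series coming from the uniform contraction of the inverse branches. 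By the mean value theorem this yields $|I_w|\asymp |D\psi^n(\xi)|^{-1}$ for any $\xi\in I_w$ (up to $C_0$ and the sizes of the base intervals $I_j$), so the length of a piece is comparable to the reciprocal of $D\psi^n$ along it, and the length of a child is a definite fraction of that of its parent.

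Next I would introduce the partition functions $Z_n(s)=\sum_{|w|=n}|I_w|^s$. Bounded distortion together with the transitive Markov structure gives the quasi-multiplicativity $Z_{n+m}(s)\asymp Z_n(s)\,Z_m(s)$, so $Z_n(s)$ grows like $e^{nP(s)}$ for a strictly decreasing pressure function $P$, and there is a unique exponent $d_0$ with $P(d_0)=0$; equivalently $c_1\le Z_n(d_0)\le c_2$ for two positive constants and all $n$. This uniform two-sided control is the heart of the matter and is the step I expect to be the main obstacle, since it is exactly where the $C^{1+\alpha}$ hypothesis (not merely $C^1$) and the transitivity of the subshift are indispensable.

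Finally I would convert this into the three conclusions. For the upper bounds, fix $\epsilon$ and take the stopping-time family $\mathcal W_\epsilon$ of words $w$ for which $I_w$ is the first ancestor with $|I_w|\le\epsilon$; bounded distortion forces $c\epsilon\le |I_w|\le \epsilon$ for a uniform $c\in(0,1)$, these pieces cover $K$, and the stopping-time sum inherits $\sum_{w\in\mathcal W_\epsilon}|I_w|^{d_0}\le c_2$. Counting, the cardinality $N$ of $\mathcal W_\epsilon$ satisfies $N(c\epsilon)^{d_0}\le c_2$, hence $N\epsilon^{d_0}\le c_2 c^{-d_0}$ uniformly in $\epsilon$; this simultaneously shows that the box dimension is $\le d_0$, that $C_K\le c_2c^{-d_0}<\infty$, and (the same sum bounding the Hausdorff sums) that $m_{d_0}(K)<\infty$, whence $\dim_H K\le d_0$. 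For the matching lower bound I would invoke the mass distribution principle: place a probability measure $\mu$ on $K$ with $\mu(I_w)\asymp |I_w|^{d_0}$ (the Gibbs measure of the potential $-d_0\log|D\psi|$), and check via bounded geometry that a ball of radius $r$ meets only a bounded number of stopping-time pieces at scale $r$, giving $\mu(B(x,r))\le C r^{d_0}$ and therefore $\dim_H K\ge d_0$. Since the box dimension always dominates the Hausdorff dimension, all three values coincide at $d=d_0$ and the fuzzy measure $C_K$ is finite, as claimed.
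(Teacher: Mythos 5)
Your argument is correct: bounded distortion for $C^{1+\alpha}$ expanding Markov maps, the partition-function/pressure equation singling out $d_0$, the stopping-time covers yielding both the box-dimension upper bound and the uniform bound $N\epsilon^{d_0}\le c_2c^{-d_0}$ (hence $C_K<\infty$), and the Gibbs-measure/mass-distribution principle for the lower bound assemble into a complete proof. The paper itself gives no proof of this statement --- it simply cites Palis--Takens --- and your sketch is essentially the standard bounded-distortion argument found there, so the two approaches coincide.
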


\section{Main result and its applications}\label{Main result and its applications}
Here is the main result of this article. Its proof will be done in \textsection \ref{prooftheo1}.
\begin{theo}\label{theo1}
Let $\tilde K= [0,\diam\, \tilde K]\setminus \sqcup_n (a_n,b_n)$ be a Cantor set satisfying condition ($\mathcal C_p$). Put $l_n:= b_n-a_n$. Let $K$ be a regular Cantor set of dimension $d< 1-p$ and box fuzzy measure $C_K$ satisfying:
\[\sum_{\{n: \; l_n > \diam\, K\}} (\diam\, K +l_n) + 2 C_K \sum_{\{n:\; l_n \le \diam\, K\}}  (l_n)^{1-d}< \diam(\tilde K)-\diam(K),\]
then the set of parameters $t\in \mathbb R$ such that $t+K$ is included in $\tilde K$ has  Lebesgue positive measure. More precisely, its Lebesgue measure is at least
\[\diam(\tilde K)-\diam(K)-\big(\sum_{\{n: \; l_n > \diam\, K\}} (\diam\, K +l_n) + 2 C_K \sum_{\{n:\; l_n \le \diam\, K\}}  (l_n)^{1-d}\big).\]
\end{theo}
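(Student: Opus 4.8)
The plan is to describe the good parameter set exactly and then bound its complement gap by gap. After translating $K$ so that $\min K=0$, hence $K\subset[0,\diam K]$ with $\max K=\diam K$, a parameter $t$ satisfies $t+K\subset\tilde K$ precisely when $t+K\subset[0,\diam\tilde K]$ and $(t+K)\cap(a_n,b_n)=\emptyset$ for every $n$. The first condition confines $t$ to the window $W:=[0,\diam\tilde K-\diam K]$, of length $\diam\tilde K-\diam K$. The second condition excludes, for each $n$, the \emph{bad set}
\[B_n:=\{t:\;(t+K)\cap(a_n,b_n)\neq\emptyset\}=(a_n,b_n)-K,\]
since $t+k\in(a_n,b_n)$ for some $k\in K$ holds exactly when $t\in(a_n-k,b_n-k)$. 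Thus the good set is $W\setminus\bigcup_n B_n$, and it suffices to show that $\leb(W)-\sum_n\leb(B_n)$ is bounded below by the claimed quantity; this reduces everything to estimating $\leb(B_n)$.

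I would then split into two cases. For a \emph{large gap} ($l_n>\diam K$), the set $B_n=(a_n,b_n)-K$ is contained in a single interval of diameter at most $l_n+\diam K$, giving $\leb(B_n)\le\diam K+l_n$, which is the summand of the first sum. For a \emph{small gap} ($l_n\le\diam K$), I would invoke the box fuzzy measure: by definition of $C_K$, taking $\epsilon=l_n\le\diam K$ there is a cover of $K$ by $N$ intervals $U_1,\dots,U_N$ of diameter $l_n$ with $Nl_n^{d}\le C_K$, i.e.\ $N\le C_K l_n^{-d}$. Since $B_n\subset\bigcup_i\big((a_n,b_n)-U_i\big)$ and each $(a_n,b_n)-U_i$ lies in an interval of diameter at most $2l_n$, we obtain $\leb(B_n)\le 2Nl_n\le 2C_K l_n^{1-d}$, which is the summand of the second sum.

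Summing the two cases over all $n$ and subtracting from $\leb(W)=\diam\tilde K-\diam K$ yields exactly the asserted lower bound, and the hypothesis makes it positive. I would also record that the two sums are finite: $\sum_n l_n^{p}<\infty$ forces only finitely many gaps to exceed $\diam K$, while for the small gaps $1-d>p$ together with $l_n\le\diam K$ gives $l_n^{1-d}\le(\diam K)^{1-d-p}\,l_n^{p}$, so $\sum_{\{n:\,l_n\le\diam K\}}l_n^{1-d}$ converges by $(\mathcal C_p)$.

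The main obstacle is the small-gap estimate, since this is the only place where the regular Cantor structure is used, and it enters solely through $C_K$ and the box dimension $d$. The precise exponent $l_n^{1-d}$, rather than a larger power, is exactly what makes the excluded measure summable under $d<1-p$, and verifying that the crude covering of $K$ by intervals of length $l_n$ really produces the stated constant $2C_K$ is the one step that requires care. The reduction to the bad sets and the large-gap bound are routine.
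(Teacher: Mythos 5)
Your proposal is correct and takes essentially the same route as the paper: the identical reduction of the good set to $[0,\diam\tilde K-\diam K]\setminus\bigcup_n\big((a_n,b_n)-K\big)$, followed by the same two-case bound on $\leb\big((a_n,b_n)-K\big)$ (the trivial $l_n+\diam K$ bound for gaps larger than $\diam K$, and the covering of $K$ by at most $C_K l_n^{-d}$ intervals of length $l_n$ for the small gaps). Your direct estimate of $(a_n,b_n)-U_i$ by intervals of length $2l_n$ is merely a rephrasing of the paper's $\epsilon/2$-neighborhood argument in its Proposition on $\leb(I-K)$, and it yields the same constant $2C_K\,l_n^{1-d}$.
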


\begin{rema}  If $K$ is a set of positive measure and  $t+K$ is included in  a Cantor set $\tilde K$ for a Lebesgue positive set $T$ of parameters $t$, then 
$K+T\subset \tilde K$ has non-empty interior (which is impossible since $\tilde K$ is a Cantor set). 

Indeed, $K+T=K-(-T)=\{s| K \cap (s-T) \ne \emptyset\}$. Since $K$ and $-T$ have positive measure, there is $\epsilon>0$ and intervals $[a,a+\epsilon]$ and $[b,b+\epsilon]$ such that $\leb([a,a+\epsilon]\cap K)>0.99 \epsilon$ and $\leb([b,b+\epsilon]\cap (-T))>0.99  \epsilon$. 
 Hence $\leb([a,a+\epsilon]\cap (a-b-T))>0.99 \epsilon$, and so $a-b$ is in the interior of $K+T$. 
\end{rema}

\begin{proof}[Proof of  Proposition  \ref{coro1}] Without lost of generality we can suppose that the diameter of $K$ is $1$, and hence for $\lambda >0$:  
\[ \diam(\lambda K)= \lambda. \]
 We recall that for every regular Cantor set $K$ of dimension $d$  and $\lambda>0$, it holds $  C_{\lambda K} = \lambda^d C_K$.
 
By Theorem \ref{theo1},  Proposition \ref{coro1} holds if  for $\lambda$ small:
\begin{equation}\label{estim1}\sum_{\{n: \; l_n > \lambda\}} (\lambda +l_n) + 2 \lambda^d C_K \sum_{\{n:\; l_n \le \lambda\}}  (l_n)^{1-d}< \diam(\tilde K)-\lambda,\end{equation}
where $(l_n)_n$ are the diameter of the holes of $\tilde K$. As condition ($\mathcal C_{1-d}$) is satisfied by $\tilde K$,  inequality (\ref{estim1}) holds if there exists $\eta>0$ such that for $\lambda $ small:
\begin{equation}\label{estim2}\sum_{\{n: \; l_n > \lambda\}} (\lambda +l_n)< \diam(\tilde K)-\eta.\end{equation}
We remark that for every $N\ge 0$:
\[\sum_{\{n: \; l_n > \lambda\}} (\lambda +l_n) = 
\sum_{\{n\le N: \; l_n > \lambda\}} (\lambda +l_n)+\sum_{\{n> N: \; l_n > \lambda\}} (\lambda +l_n)
\le 
\sum_{\{n\le N: \; l_n > \lambda\}} (\lambda +l_n)+\sum_{n> N} 2l_n.\]
For $N$ large the second sum is small, and then by taking $\lambda $ small (in function of $N$), the first sum is close to 
$\sum_{n} l_n=\leb([0,\diam\, \tilde K]\setminus \tilde K)$. Consequently  inequality (\ref{estim2}) holds for $\lambda$ small if 
\[\leb([0,\diam\, \tilde K]\setminus \tilde K)< \diam(\tilde K)\]
which is indeed the case since $\tilde K$ has positive Lebesgue measure.
\end{proof}

Let us apply Theorem \ref{theo1} and  Proposition \ref{coro1} to a simple example and to diophantine geometry.

\begin{exem} Let $\tilde K(s)$ be the Cantor set $\cap_{n\ge N} \tilde K_n$ where $\tilde K_n$ is an union of $2^n$ intervals constructed by induction as follows:
\begin{itemize} \item $\tilde K_0:= [-1,1]$,
\item for $n>0$, $\tilde K_{n}$ is obtained by removing from each component of $\tilde K_{n-1}$ an open interval of length $ \frac{2^{-n/s}}{1-2^{1-1/s}}$, for instance in the middle of the component. 
\end{itemize}
The set $[-1,1]\setminus \tilde K(s)$ has $2^{n}$ components of length $2^{-n/s}$. Hence this Cantor set satisfies Condition ($\mathcal C_p$) for $p> s$ (and not satisfied for $p \le s$), that is, $P(K(s))=s$. From the definition of $n_0$, one checks that $\tilde K(s)$ has Lebesgue measure equals to $1$.

Hence by  Proposition \ref{coro1} the Cantor set $\tilde K(s)$ contains a positive set of translations of a dyadic Cantor set $K_d$ of dimension $d<1-s$. 
\end{exem}
%\begin{exem}
%Given a real number $\alpha$ and a real number $d\ge 2$, we say that $\alpha$ is {\it diophantine of order $d$} if there is a real constant $c>0$ such that $|\alpha-\frac{p}{q}|>\frac{c}{q^d}$, for all integers $p, q$ with $q>0$. Let $D_d$ the set of such numbers $\alpha$. We say that $\alpha$ is {\it diophantine} if $\alpha$ is diophantine of order $d$ for some $d\ge 2$. Let $D_d$ the set of the diophantine numbers. We will show that, if $K$ is a regular Cantor set of Hausdorff dimension $s$ with $s<1-\frac2{d}$, then, for almost every $t\in \mathbb R$, $K+t\subset D_d\subset D$.
%
%A corollary of this result is the following: we say that a real number is {\it badly approximable} if it is diophantine of order $2$. The set $B$ of badly approximable real numbers is the countable union for all positive integers $k$ of the regular Cantor sets $X_k=\{\alpha=[a_0;a_1,a_2,...];|a_j|\le k, \forall j\ge 0\}$ (where $[a_0;a_1,a_2,...]$ is the usual representation of $\alpha$ by continued fractions). Then it follows that, for almost every $t\in \mathbb R$, $B+t\subset D$.

\begin{proof}[Proof of Theorem \ref{dio}]
%In order to prove our result, 
Let $[a,b]$ be the support interval of $K$. Let $M$ be a large positive integer. We will estimate the measure of the set of values of $t\in [-M,M]$ for which $K+t \subset D_d$. Take $q_0$ a large positive integer. The set $D_d$ contains the set
\[ X=[a-M,b+M]\setminus\bigcup_{q\ge q_0}\bigcup_{p\in \mathbb Z, p/q \in [a-M,b+M]}\big(\frac pq-\frac1{q^d},\frac pq+\frac1{q^d}\big).\]
For $q_0$ large, $q\ge q_0$ and $p\in \mathbb Z$, the size of the interval $(\frac pq-\frac1{q^d},\frac pq+\frac1{q^d})$ is $\frac2{q^d}\le\frac2{q_0^d}<b-a=\diam(K)$. So, by Theorem \ref{theo1}, the Lebesgue measure of the set of $t\in [-M,M]$ such that $K+t\subset X \subset D_d$ is at least
\[(2M+b-a)-(b-a)-2C_K\sum_{q\ge q_0}\sum_{p\in \mathbb Z, p/q \in [a-M,b+M]}(\frac2{q^d})^{1-s}\ge 2M-2C_K\sum_{q\ge q_0}4Mq(\frac{2}{q^d})^{1-s}>\]
\[>2M-8MC_K\sum_{q\ge q_0}\frac1{q^{(1-s)d-1}}>2M-\frac{16MC_K}{((1-s)d-2)q_0^{(1-s)d-2}}.\]
Since $s<1-\frac2{d}$, $(1-s)d>2$, and $(1-s)d-2>0$, so this Lebesgue measure tends to $2M$ when $q_0$ grows. This implies that the set of $t\in \mathbb R$ such that $K+t\subset D_d\subset D$ has full Lebesgue measure, and we are done.
\end{proof}

\section{Proof of Theorem \ref{theo1}}\label{prooftheo1}
We recall that $\tilde K$ is included in $[0, \diam\, \tilde K]$ with $[0,\diam\, \tilde K]\setminus \tilde K= \sqcup_n (a_n, b_n)$.
 
We can suppose that the regular Cantor set $K$ of dimension $d$ is included in $[0,\diam\, K]$.
 Observe that if $t$ is  in $[0, \diam\, \tilde K-\diam\, K]$, then $t+K$ is included in $[0,\diam\, \tilde K]$.

Let $\mathbb X$ be the set of parameters $t$ such that $t+K\subset \tilde K$.

Let us consider $t\in [0, \diam\, \tilde K-\diam\, K]\setminus \mathbb X$, in other words $K+t$ is not included in $\tilde K$. 

This means that $K+t$ intersects the interval $(a_n,b_n)$ for a certain $n$. That is to say: $t\in (a_n,b_n)-K$.
 This implies:
\begin{fact} \label{lefait}
$[0,\diam\, \tilde K-\diam\, K]\setminus \mathbb X= \bigcup_{n}((a_n,b_n)-K)$.  
\end{fact}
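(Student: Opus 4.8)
The plan is to prove the equality by unwinding the definition of the arithmetic difference and matching it against the hole structure of $\tilde K$. The single observation driving everything is that, for a fixed $n$, the translate $t+K$ meets the hole $(a_n,b_n)$ if and only if $t\in (a_n,b_n)-K$: indeed $t+k\in(a_n,b_n)$ for some $k\in K$ is the same as $t=(t+k)-k\in(a_n,b_n)-K$. Taking the union over $n$, the translate $t+K$ meets some hole, equivalently meets the complement $\sqcup_n(a_n,b_n)$ of $\tilde K$ in $[0,\diam\,\tilde K]$, exactly when $t\in\bigcup_n((a_n,b_n)-K)$.

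For the inclusion $\subseteq$ I would argue as in the discussion preceding the statement. If $t\in[0,\diam\,\tilde K-\diam\, K]$ then $t+K\subseteq[t,t+\diam\, K]\subseteq[0,\diam\,\tilde K]$, so $t+K$ lies in the support interval of $\tilde K$; if in addition $t\notin\mathbb X$, then $t+K\not\subset\tilde K$, and since the only part of $[0,\diam\,\tilde K]$ missing from $\tilde K$ is $\sqcup_n(a_n,b_n)$, the translate $t+K$ must meet some hole. By the observation above, $t\in\bigcup_n((a_n,b_n)-K)$. Conversely, if $t\in(a_n,b_n)-K$ for some $n$, the same observation produces a point of $t+K$ inside the hole $(a_n,b_n)$, hence outside $\tilde K$, so $t+K\not\subset\tilde K$ and $t\notin\mathbb X$; this is the inclusion $\supseteq$.

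The only delicate point, and the step I expect to need the most care, is the bookkeeping at the two ends of the parameter interval $[0,\diam\,\tilde K-\diam\, K]$. I would first check that $\mathbb X\subseteq[0,\diam\,\tilde K-\diam\, K]$: the normalization $K\subseteq[0,\diam\, K]$ forces $\min K=0$ and $\max K=\diam\, K$, both lying in the compact set $K$, so $t\in\mathbb X$ gives $t=t+0\in\tilde K$ and $t+\diam\, K\in\tilde K\subseteq[0,\diam\,\tilde K]$, i.e. $0\le t\le\diam\,\tilde K-\diam\, K$. This makes the left-hand difference well posed. In the reverse inclusion one must still secure $t\in[0,\diam\,\tilde K-\diam\, K]$ before placing $t$ in the left-hand set; since $(a_n,b_n)-K$ can dip below $0$ whenever $a_n<\diam\, K$, the clean reading intersects the right-hand union with the parameter interval. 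With that understood the two inclusions combine into the asserted identity, and I note that only the inclusion $\subseteq$ is needed in the sequel, where it feeds the bound $\leb\big([0,\diam\,\tilde K-\diam\, K]\setminus\mathbb X\big)\le\sum_n\leb\big((a_n,b_n)-K\big)$.
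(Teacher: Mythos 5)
Your proof is correct and takes essentially the same route as the paper: the key equivalence ($t+K$ meets the hole $(a_n,b_n)$ if and only if $t\in(a_n,b_n)-K$), combined with the observation that $t+K\subset[0,\diam\tilde K]$ whenever $t\in[0,\diam\,\tilde K-\diam\, K]$, is exactly the paper's argument, which in fact only establishes (and only ever uses) the inclusion $\subseteq$ feeding the measure bound of the corollary. Your caution about the endpoints is warranted: as literally written the equality can fail, since $(a_n,b_n)-K$ may contain points outside $[0,\diam\,\tilde K-\diam\, K]$ (for instance negative ones when $a_n<\diam\, K$), so the right-hand side should be understood as intersected with the parameter interval --- a point the paper glosses over.
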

A direct consequence is the following nice inequality:
\begin{coro}\label{evaluationdesmesures} $\leb([0,\diam\, \tilde K-\diam\, K]\setminus \mathbb X)\le \sum_{n}\leb((a_n,b_n)-K)$.\end{coro}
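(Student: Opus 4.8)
The plan is to deduce this directly from Fact~\ref{lefait} combined with the countable subadditivity of Lebesgue measure; no new geometric input is needed at this step. First I would check that the objects involved are measurable, so that the inequality makes sense. Each set $(a_n,b_n)-K$ can be written as
\[(a_n,b_n)-K=\bigcup_{k\in K}\big((a_n,b_n)-k\big),\]
a union of translates of the open interval $(a_n,b_n)$, hence an open subset of $\R$. Consequently the countable union $\bigcup_n\big((a_n,b_n)-K\big)$ is open, and by the equality of Fact~\ref{lefait} the set $[0,\diam\,\tilde K-\diam\,K]\setminus\mathbb X$ on the left-hand side is open as well; in particular its Lebesgue measure is well defined.

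The second and final step is to insert the equality of Fact~\ref{lefait} and then apply subadditivity:
\[\leb\big([0,\diam\,\tilde K-\diam\,K]\setminus\mathbb X\big)=\leb\Big(\bigcup_n\big((a_n,b_n)-K\big)\Big)\le\sum_n\leb\big((a_n,b_n)-K\big),\]
which is exactly the claimed bound.

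I do not expect any real obstacle here: the entire content of the estimate is carried by Fact~\ref{lefait}, and the passage from that equality to the stated inequality is nothing more than countable subadditivity (the inequality being typically strict, since the sets $(a_n,b_n)-K$ overlap). The genuine work is postponed to the next steps of the proof of Theorem~\ref{theo1}, namely estimating each term $\leb\big((a_n,b_n)-K\big)$ — splitting according to whether $l_n=b_n-a_n$ exceeds $\diam\,K$ or not, and using the box fuzzy measure $C_K$ together with $d<1-p$ and condition $(\mathcal C_p)$ to sum the small-hole contributions — and comparing the total with $\diam(\tilde K)-\diam(K)$.
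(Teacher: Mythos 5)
Your proof is correct and is exactly the argument the paper intends: the corollary is stated as a direct consequence of Fact~\ref{lefait} together with countable subadditivity of Lebesgue measure, which is what you do (your preliminary observation that each set $(a_n,b_n)-K$ is open, hence measurable, is a harmless and valid extra check). Nothing further is needed.
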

%\begin{rema} The above union is in general not disjoint. In the next section we will see that is depends on the examples, but in some very natural cases this union is disjoint if condition $(C_p)$ does not hold.???????????\end{rema}

The measure of  $(a_n,b_n)-K$ is estimated by the following Proposition shown below.

\begin{prop}\label{propo1}\label{relationdimmesinter} For every interval $I$ of $\R$, we have:
\[\left\{\begin{array}{cl} \leb\left(I-K\right)\le 2 C_K ({\leb \, I})^{1-d}& \text{if } \diam\, I\le \diam\,K,\\
\leb\left(I-K\right) \le \leb \, I +\diam\, K& \text{if } \diam\, I\ge \diam\, K,\end{array}\right.\]
with $C_K$ the box fuzzy measure of dimension $d$.
\end{prop}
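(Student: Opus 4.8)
The plan is to recognize $I-K$ as a one-sided Minkowski thickening of $-K$ and to bound its measure by covering $K$ efficiently at the scale $\diam\, I$. Writing $I=[c,c+\leb\, I]$ and recalling that we may take $K\subset[0,\diam\, K]$, I would first observe that $I-K=\bigcup_{k\in K}(I-k)$ is a translate of $[0,\leb\, I]+(-K)$, so estimating $\leb(I-K)$ amounts to measuring $-K$ fattened to one side by $\leb\, I$. Both cases then reduce to elementary geometry of this thickening.

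For the case $\diam\, I\ge \diam\, K$, the estimate is immediate from a single containment: every element $i-k$ with $i\in I$, $k\in K$ satisfies $c-\diam\, K\le i-k\le c+\leb\, I$, so $I-K\subset[c-\diam\, K,\,c+\leb\, I]$, an interval of length $\leb\, I+\diam\, K$. This yields the second inequality (and in fact it holds for every $I$, being used only where it is the better of the two bounds).

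For the case $\diam\, I\le \diam\, K$, I would set $\epsilon:=\diam\, I=\leb\, I$ and invoke the $d$-box fuzzy measure. Since $0<\epsilon\le\diam\, K$, the scale $\epsilon$ lies in the range over which the supremum defining $C_K$ is taken, so there is a finite cover $(U_i)_{i=1}^N$ of $K$ by intervals of diameter $\epsilon$ with $N\epsilon^{d}\le C_K$, that is $N\le C_K\,\epsilon^{-d}$. Then $I-K\subset\bigcup_{i=1}^N(I-U_i)$, and each difference $I-U_i$ of two intervals of diameter $\epsilon$ is itself an interval of diameter $2\epsilon$. Summing lengths gives $\leb(I-K)\le 2N\epsilon\le 2C_K\,\epsilon^{1-d}=2C_K(\leb\, I)^{1-d}$, which is the first inequality.

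The only delicate point is matching the hypothesis to the domain of the supremum defining $C_K$: the bound $N\epsilon^{d}\le C_K$ is available precisely because the case hypothesis $\diam\, I\le\diam\, K$ forces $\epsilon\le\diam\, K$, which is exactly the range over which the box fuzzy measure is defined (were $\epsilon$ allowed to exceed $\diam\, K$ no such uniform bound would hold, and one would have to fall back on the first case). The factor $2$ is an honest consequence of the Minkowski difference of two equal-length intervals doubling the length, and no finer combinatorics are required.
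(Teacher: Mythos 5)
Your proof is correct and follows essentially the same route as the paper's: in both arguments the second case comes from the trivial containment of $I-K$ in an interval of length $\leb\, I+\diam\, K$, and the first case comes from covering $K$ by at most $C_K\epsilon^{-d}$ intervals of length $\epsilon=\leb\, I$ (available precisely because $\epsilon\le\diam\, K$) and observing that differencing with $I$ doubles each length, giving $2C_K\epsilon^{1-d}$. The only cosmetic difference is that the paper phrases the doubling as passing to the $\epsilon/2$-neighborhood of $K$ after centering $I$ at the origin, whereas you take the Minkowski difference $I-U_i$ cover element by cover element.
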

From  Proposition \ref{relationdimmesinter}, with $l_n= b_n-a_n$:
\[\leb([0,\diam\, \tilde K-\diam\, K]\setminus \mathbb X)\le \sum_{n}\leb((a_n,b_n)-K)\]\[\le  
\sum_{\{n:\; l_n > \diam\, K\}} (\diam\, K+ l_n)+\sum_{\{n:\; l_n \le \diam\, K\}} 2 C_K {l_n}^{1-d}
,\]
which implies Theorem \ref{theo1}.

\begin{proof}[Proof of Proposition \ref{propo1}]
To see the case $\diam\, I\ge \diam\, K$ consider that $K\subset[0, \diam\, K]$ and that $I=[-\diam \, I,0]$ (this can be supposed via translation). Then $K-I= [0, \diam\, K+\diam\, I]$.

Let us study the case $\diam(I)\le \diam(K)$.
We can assume $I=[-\epsilon/2,\epsilon/2]$ with $\epsilon=\leb \, I$. 

First observe that
 \begin{equation}\label{K-(a_n,b_n)}[-\epsilon/2,\epsilon/2]-K=-(K-[-\epsilon/2,\epsilon/2])=-(K+[-\epsilon/2,\epsilon/2]).
\end{equation}
Note that $K+[-\epsilon/2,\epsilon/2]$ is the closure of the $\frac \epsilon 2$-neighborhood of $K$.
 
By definition of $C_K$, the Cantor set $K$ is included in the union of $C_K \epsilon^{-d}$ intervals of length $\epsilon$. The $\epsilon/2$-neighborhood of this union is contained in  the union $U$ of $C_K \epsilon^{-d}$ intervals of length $2\epsilon$, the Lebesgue measure of which is 
\[\leb(U)\le  2 C_K \epsilon^{1-d}= 2C_K (\leb \, I)^{1-d}.\]
As $U$ contains $K+ [-\epsilon/2,\epsilon/2]$, this proves the Proposition.
\end{proof}

\section{Discussion on the optimality of Condition ($\mathcal C_p$)}\label{Discussion}
\subsection{Example for which  Proposition \ref{coro1} is trivially non optimal}
Condition ($\mathcal C_p$) is not optimal for Proposition \ref{coro1}. This follows from the following observations. 

Consider  two Cantor sets $\tilde K_1$ and $\tilde K_2$ of positive Lebesgue measure and satisfying respectively $\mathcal C_{p_1}$ and $\mathcal C_{p_2}$. 

The disjoint union $\tilde K$  of two Cantor sets $\tilde K_1$ and $\tilde K_2$ is of positive Lebesgue measure and satisfies, in general,  $\mathcal C_{p}$ only for $p\ge \max (p_1,p_2)$, whereas Proposition \ref{coro1} applied to $\tilde K_1$ and $\tilde K_2$ implies that it contains a Lebesgue positive set of translations of a Cantor set with Hausdorff dimension $1- \min(p_1,p_2)$.

In the seek of an optimal result,  the following definition makes sens.    
\begin{defi} Let $\tilde K $ be a Cantor set of positive measure included in $[0,1]$.
Put \[\hat P(\tilde K):= \inf\{P(\hat K):\; \hat K\subset \tilde K \text{ compact subset of Lebesgue positive measure}\}\]
%_{\hat K\subset \tilde K,\; \leb(\hat K)>0} \hat C_p(\hat K).\]
\end{defi}

\begin{ques}\label{quesopti}  If $\tilde K $ is a Cantor set of positive measure and containing a positive set of translations of a regular Cantor set of dimension $d$. Does $d+\hat P(\tilde K)\le 1$?
\end{ques}
We will answer negatively to this question in \textsection \ref{negarepo} thanks to a rather artificial example. On the other hand, for many stable examples, Proposition \ref{coro1} is close to be optimal.  

\subsection{A simple Example for which Proposition \ref{coro1} is stably close to be optimal}
We are going to prove the following result:
\begin{prop}\label{example}
For every $d\in (0,1)$, for every $p>1-d$, there exists a regular Cantor set $K$ of dimension $d$, a Cantor set of positive measure $\tilde K$ which satisfies Condition ($\mathcal C_p$) and such that for every $\lambda>0$, and for every $t$, the set  $\lambda K+ t$ is not included in $\tilde K$.
\end{prop}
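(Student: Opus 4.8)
The plan is to build $K$ and $\tilde K$ from matched geometric scales and then to exhibit, for the resulting $\tilde K$, a hole meeting every affine copy $\lambda K+t$. For $K$ I take the self-similar Cantor set of dimension $d$ with two branches of ratio $r=2^{-1/d}$, so its largest gap has relative size $1-2r$ and, at generation $k$, it is a union of $2^{k}$ clusters of diameter $r^{k}\diam K$. For $\tilde K$ I fix $\rho\in(0,1)$ and a growth rate $\beta$ with
\[\rho^{-(1-d)}\le \beta<\rho^{-p},\]
a range that is nonempty \emph{precisely because} $p>1-d$; I then remove, at level $n$, about $\beta^{n}$ holes of length $\asymp\rho^{n}$, positioned on a self-similar net adapted to the cluster positions of $K$. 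Writing $l_n$ for the hole lengths, the two bookkeeping estimates are routine geometric series: the $(\mathcal C_p)$ sum is $\asymp\sum_n(\beta\rho^{p})^{n}<\infty$ since $\beta<\rho^{-p}$, while the total removed length is $\asymp\sum_n(\beta\rho)^{n}$, which converges because $\beta<\rho^{-p}<\rho^{-1}$, so after scaling the holes down by a small global constant factor $\tilde K$ retains positive Lebesgue measure.

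By Fact \ref{lefait}, no $\lambda K+t$ embeds exactly when $\bigcup_n\big((a_n,b_n)-\lambda K\big)$ covers the full parameter interval $[0,\diam\tilde K-\lambda]$. Since $K$ is self-similar of ratio $r$, the set of ``bad'' scales is invariant under $\lambda\mapsto r\lambda$, so it suffices to establish this covering for $\lambda$ in one compact fundamental interval $\lambda\in[r\,\diam K,\diam K]$ and, for each such $\lambda$, for $t$ ranging in a compact set. This reduces the continuum of parameters $(\lambda,t)$ to a compact family.

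The crux is to promote a mass count into an actual covering. Read as a lower bound, Proposition \ref{propo1} gives $\leb\big((a_n,b_n)-\lambda K\big)\asymp\lambda^{d}l_n^{1-d}$, so the total available mass is $\asymp\lambda^{d}\sum_{l_n\le\lambda}(\#)\,l_n^{1-d}\asymp\lambda^{d}\sum_{n\ge m}(\beta\rho^{1-d})^{n}$ (with $\rho^{m}\asymp\lambda$), and this diverges precisely because $\beta\rho^{1-d}\ge1$, i.e. because $\beta\ge\rho^{-(1-d)}$, which is where $p>1-d$ is consumed. Infinite mass is only necessary, not sufficient, for covering, and the main obstacle lives here: a single large gap of $\lambda K+t$ can absorb many holes of $\tilde K$ harmlessly, so the covering cannot come from one scale (indeed positive measure forbids any single scale from carrying a definite proportion of the mass). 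I therefore use the self-similar net to force, as $n$ grows, the translated sets $(a_n,b_n)-\lambda K$ to land on the clusters of $\lambda K+t$ at generation $\asymp n$ and to overlap consecutively along the net, thereby tiling $[0,\diam\tilde K-\lambda]$. The delicate point is the uniformity of this overlap over the compact $(\lambda,t)$-range and the control of the inter-scale interaction; I expect to carry it out by descending the generations of $\lambda K+t$ until its fine-scale cluster pattern is forced to meet a hole, and by a compactness argument to make the estimate uniform.

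Finally, the roles of the hypotheses are transparent in the window $\rho^{-(1-d)}\le\beta<\rho^{-p}$: the left endpoint makes the covering sum diverge, the right endpoint preserves $(\mathcal C_p)$, the window is nonempty exactly when $p>1-d$, any $d\in(0,1)$ is reached by choosing $r=2^{-1/d}$, and positive measure is secured by the global scaling factor. The scale-invariance of the construction delivers the conclusion for every $\lambda>0$ at once, not merely for a fixed scale, which is the content of the proposition.
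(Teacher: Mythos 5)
Your construction sets up the bookkeeping (the $(\mathcal C_p)$ sum, the measure of $\tilde K$, the divergence of the mass $\sum\leb((a_n,b_n)-\lambda K)$) correctly, but the entire content of the proposition lies in the step you defer: promoting divergent mass to an actual covering of the parameter line, i.e.\ showing that \emph{every} pair $(\lambda,t)$ is blocked. You acknowledge this is ``the delicate point'' and write that you ``expect to carry it out''; nothing in the proposal actually does it. Worse, the mechanism you propose --- holes placed on a self-similar net adapted to the cluster positions of $K$ --- is essentially the construction of \textsection\ref{A paradigmatic example of non optimality} of the paper, which is a \emph{counterexample} in exactly the regime you are working in: there the holes sit at the grid points $q/10^i$ with lengths $10^{-[i/p]}$ (in your notation $\beta=10$, $\rho=10^{-1/p}$, so the mass diverges since $p+d>1$), the net is perfectly aligned with the decimal cluster structure of $K$, and yet $K+t\subset\tilde K_p$ holds for a set of $t$ of \emph{positive} measure. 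Alignment makes the translated hole sets $(a_n,b_n)-\lambda K$ overlap resonantly instead of tiling, which is precisely why that example defeats optimality of Theorem \ref{theo1}. So a structured net adapted to $K$ is not merely unproven to block; it can provably fail to block. (The paper's \textsection\ref{optimal} shows that \emph{random}, i.e.\ anti-aligned, placement does block almost surely --- the opposite of your design principle.)

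There is a second, independent gap: your scale reduction runs in the wrong direction. Self-similarity of $K$ gives $rK+c'\subset K$, hence an embedding at scale $\lambda$ yields embeddings at all scales $r^j\lambda$, $j\ge 0$; equivalently, \emph{non}-embedding propagates from smaller scales to larger ones. Verifying non-embedding on a fundamental interval $[r\diam K,\diam K]$ therefore says nothing about smaller $\lambda$, and $\tilde K$ itself cannot carry the compensating symmetry: a compact set invariant under a strict affine self-similar structure with contraction ratios summing to less than $1$ has Lebesgue measure zero, so a positive-measure $\tilde K$ admits no exact scale invariance to quotient by. The paper sidesteps both difficulties by making the blocking set a Cantor set rather than a union of intervals: choosing $d'<p$ with $d+d'>1$, it takes the pair $(K,K')$ with the stable intersection property of Theorem \ref{gugu} (Moreira), removes from $[0,1]$ tiny neighborhoods $V(K'+r,\eta_r)$, $\eta_r=b^{-M}$, of \emph{all rational translates} of $K'$, and then any $\lambda K+t$ links with some $K'+r$ (a rational $r\in(t,t+\lambda)$ prevents either set from lying in a gap of the other), hence intersects it. The ``for every $\lambda>0$ and every $t$'' conclusion comes for free from the density of $\mathbb Q$, not from a covering estimate; that linking theorem is the deep input your proposal would need to replace, and no elementary counting of the kind you sketch is known to substitute for it.
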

\begin{rema} This example is stable by small $C^2$-perturbations of $K$.
\end{rema}
The proof of this Proposition uses the following.
\begin{theo}[\cite{Moreira}, Prop2.1, Theorem II-1.2] \label{gugu}Given $1>d,d^\prime>0$ such that $d+d^\prime>1$, there exist two regular Cantor sets $K$ and $K^\prime$ with Hausdorff dimensions respectively $d$ and $d^\prime$, that intersect stably. Besides, for any $\lambda>0$ and any real number $t$, $\lambda K+t$ is contained in a gap of $K^\prime$ or $K^\prime$ is contained in a gap of $\lambda K+t$, or $(\lambda K+t)\cap K^\prime\not= \emptyset$. Moreover, this property is $C^2-$open in $(K,K^\prime)$.
\end{theo}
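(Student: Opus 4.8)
The plan is to realize $K$, $K'$ together with a single combinatorial object — a \emph{recurrent compact set} of relative configurations, in the sense of the renormalization scheme of Moreira and Yoccoz — from which all three assertions (the trichotomy, stable intersection, and $C^2$-openness) follow simultaneously. The strict inequality $d+d'>1$ will enter only in the construction of that object, not in deducing the conclusions from it.

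First I would fix the encoding. Each regular Cantor set is presented by its pieces, the transition data of its expanding map $\psi$, and its \emph{limit geometries}: for every admissible left-infinite word one extracts, as a renormalized limit of the branches of $\psi^n$ on ever deeper pieces, a $C^{1+\alpha}$ chart describing the asymptotic shape of the corresponding piece, and these depend continuously on the Cantor set in the $C^2$ topology. A \emph{relative configuration} then records, modulo orientation-preserving affine maps, the relative position of a piece $A'$ of $K'$ against a piece $A$ of $K$ whose convex hulls overlap, together with their (bounded) relative scale. The configurations form a finite-dimensional space $\mathcal Q$, acted on by a multivalued \emph{renormalization operator} $T$ which, given an overlapping pair $(A,A')$, returns the configurations of all pairs of children $(A_i,A'_j)$ of comparable size whose hulls still overlap. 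Quotienting by affine maps is exactly what lets one object control every copy $\lambda K+t$: the parameters $(\lambda,t)$ fix only the initial relative scale and position, which a few steps of $T$ bring into the compact range of $\mathcal Q$.

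Next I would isolate the criterion and read off the conclusions. Call a compact set $\mathcal L\subset\mathcal Q$ of overlapping configurations \emph{recurrent} if every configuration in $\mathcal L$ admits, under $T$, at least one child lying in the interior of $\mathcal L$; taking $\mathcal L$ large enough that finitely many renormalizations carry every overlapping configuration into it (an \emph{absorbing} recurrent set), the trichotomy follows. Indeed, if neither $\lambda K+t$ lies in a gap of $K'$ nor $K'$ in a gap of $\lambda K+t$, then the convex hull of each meets the other set, so the top pair is overlapping; renormalizing into $\mathcal L$ and then invoking recurrence, one descends an infinite chain of overlapping, nested piece-pairs of shrinking diameter, whose unique common limit point lies in $(\lambda K+t)\cap K'$. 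Stable intersection is then the instance of the linked reference position, and $C^2$-openness is immediate: the interior condition defining recurrence is open, and the limit geometries vary continuously, so $\mathcal L$ remains recurrent and absorbing for every pair $C^2$-near $(K,K')$.

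The crux, and the step I expect to be the main obstacle, is constructing $K$, $K'$ of the prescribed dimensions $d,d'$ together with the recurrent absorbing set $\mathcal L$, precisely where $d+d'>1$ is used. The elementary route via the Newhouse gap lemma is insufficient here, since a large thickness product $\tau(K)\tau(K')>1$ forces both dimensions near $1$ and cannot be arranged for, say, $d=d'$ slightly above $1/2$; the dimension hypothesis is genuinely weaker than a thickness hypothesis. Instead one exploits that $d+d'>1$ is exactly the density threshold at which, at every small scale $\delta$, the $\sim\delta^{-d}$ pieces of $K$ and the $\sim\delta^{-d'}$ pieces of $K'$ meeting a fixed window are numerous enough that their overlapping pairs cover the relevant part of $\mathcal Q$ with room to spare. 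Turning this heuristic into a stable statement — choosing model Cantor sets of controlled ratios and many branches for which $T$ maps a suitable compact box \emph{into its own interior}, the strict inequality supplying exactly the slack that separates interior covering from mere surjectivity — is the technical heart of the argument, and is what the cited work of Moreira carries out.
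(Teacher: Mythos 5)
The first thing to note: the paper contains no proof of this statement --- Theorem \ref{gugu} is imported verbatim from \cite{Moreira} (Prop.~2.1 and Theorem II-1.2 there) and is used as a black box in the proof of Proposition \ref{example}. So the only meaningful comparison is with the cited source, and against that source your outline is faithful: the recurrent-compact-set formalism (limit geometries, relative configurations modulo affine maps, a renormalization operator $T$, recurrence meaning every configuration has a child in the interior of $\mathcal L$) is exactly the mechanism of Moreira and Moreira--Yoccoz, and the deductions you draw from it are sound --- descent through nested linked pairs of pieces of shrinking diameter produces a point of $(\lambda K+t)\cap K'$, quotienting by affine maps is indeed what handles all $(\lambda,t)$ simultaneously, and $C^2$-openness does follow from the interior condition together with continuous dependence of limit geometries. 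Your remark that the Newhouse gap lemma cannot reach, say, $d=d'$ slightly above $1/2$ is also correct and well placed.

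As a proof, however, the attempt has a genuine gap, and you name it yourself: the construction of $K,K'$ with Hausdorff dimensions exactly $d,d'$ carrying an absorbing recurrent set is deferred to ``the cited work of Moreira.'' That construction is not a routine verification --- it is the entire content of the theorem; what you actually establish is only the comparatively easy implication ``absorbing recurrent set $\Rightarrow$ trichotomy, stability, openness.'' The counting heuristic you offer ($\sim\delta^{-d}$ pieces of $K$ against $\sim\delta^{-d'}$ pieces of $K'$ covering the relevant part of $\mathcal Q$ ``with room to spare'') cannot be upgraded to recurrence by any soft argument: numerosity of overlapping pairs yields at best intersection for typical relative positions (a Marstrand-type conclusion), whereas recurrence is a pointwise open covering property --- \emph{every} configuration in $\mathcal L$ must have a child in the interior of $\mathcal L$ --- which has to be checked on explicit models (affine Cantor sets with many branches and controlled ratio and gap geometry), and that verification is where all the work lies. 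Two smaller inaccuracies: the configuration space $\mathcal Q$ is not finite-dimensional --- it fibers over the compact space of limit geometries indexed by left-infinite words, and it is compactness, not finite-dimensionality, that the argument needs; and the absorbing property (finitely many renormalizations bring every linked top configuration of every $\lambda K+t$ versus $K'$ into $\mathcal L$), which is precisely what the ``for any $\lambda>0$ and any real $t$'' clause requires, is an additional constraint on the construction beyond recurrence, so it too belongs to the unproved core rather than to the formal part of your argument.
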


\begin{proof}[Proof of Proposition \ref{example}] 
Let $0<d,p<1$ such that $d+p>1$. Let $K$ and $K^\prime$ be given by Theorem \ref{gugu} with $d$ and $d^\prime <p$. We can suppose that the convex hulls of $K$ and $\tilde K$ are $[0,1]$.  
We are going to construct a compact set $\tilde K$ of positive measure included in the following set:
\[X:= [0,1]\setminus \bigcup_{r\in \Q \cap (0,1)} (K^\prime+r)\]
The set $X$ has full measure but is not compact. Nevertheless $X$ cannot contain $\lambda\cdot K+t$ for every $\lambda>0$ and $t\in \R$. For the sake of contradiction, suppose this holds. Then $0< t\le 1-\lambda$. Let $r\in \Q \cap (t,t+\lambda)$. Then, since $\lambda < 1$, none of the Cantor sets $\lambda\cdot K+t, K^\prime+r$ can be contained in a gap of the other, and so, by the above theorem, they should have non-empty intersection, which contradicts the hypothesis that $\lambda\cdot K+t$ is contained in $X$. 

It remains to construct a Cantor set which satisfies condition $(\mathcal C_p)$ and which is included in $X$. 

Let $M\ge 1$ be large, and for every $r=a/b\in \Q$  put  $\eta_r:=  b^{-M}$. We can now define:
\[\hat  K:= [0,1]\setminus \bigcup_{r\in \Q \cap [0,1]} V(K^\prime+r,\eta_r),\]
where $V(K^\prime+r,\eta_r)$ is the neighborhood of $K^\prime+r$ made by an open cover of it by $C_{K^\prime}\eta_r^{-d^\prime}$ intervals  of length $\eta_r$, with $C_{K^\prime}$ the $d^\prime$-box fuzzy measure.
Hence the set $\hat K$ satisfies condition ($\mathcal C_p$)  if:
\[\sum_b b\times C_{K^\prime}\eta_r^{-d^\prime}  \eta_r^p<\infty.\]
As $\eta_r=b^{-M}$, the above sum converges if $1+M d^\prime-M p<0$, which can be obtained for $M$ large since $d^\prime<p $. 

The set $\hat K$ is compact, of positive Lebesgue measure (for $M$ large enough), and does not contain any interval.

 Let $\tilde K$ be the set of condensation points\footnote{Points which have a neighborhood whose intersection with $\hat K$ is uncountable}. Then $\tilde K$ is a Cantor set, satisfies also ($\mathcal C_p$) (see remark \ref{exprediff}), has the same positive measure as $\hat K$ (by Cantor-Bendixson theorem) and does not contains $\lambda K+t$ for any $\lambda>0$ and $t$.  
\end{proof}
\subsection{Counter examples to the optimality}
\subsubsection{A counterexample to the optimality of Theorem \ref{theo1}}\label{A paradigmatic example of non optimality} 
For $p>0$, let $N_p$ be a large integer defined afterward and  let $\tilde K_p$ be the Cantor set obtained by removing from $[0,1]$ the following intervals:
\[U_{iq} := \frac q{10^i} +(0, 10^{-[i/p]}),\quad i\ge N_p,\; q\in \{0,\dots , 10^i-1\}.\]
We remark that $\tilde K_p$ has positive measure whenever $p<1$ and $N_p$ is sufficiently large. Moreover $\tilde K_p$ satisfies condition $\mathcal C_{p'}$ for every $p'>p$. The intervals $(U_{iq})_{iq}$ are nested or disjoint, even if we remove those which are nested, $\tilde K_p$ does not satisfy Condition ($\mathcal C_{p'}$), for $p'\le p$.

Therefore, $P(\tilde K_p)=p$.

We remark that $U_{iq}$ is formed by points $u$ with the following decimal expression:
\[u= 0,u_1 u_2\cdots u_i\underbrace{0\cdots0}_{(j-i)\times}  u_{j+1}\dots u_n\cdots,\quad (u_l)_l\in\{0,\dots ,9\}\] 
such that $q= \sum_{l=1}^i u_l 10^{i-l}$, $j= [i/p]$,  and $u_{j+1}\cdots u_n\cdots$ any numbers in $\{0,..9\}$ (not all equal to zero nor almost all equal to $9$). 

Let $K$ be the regular Cantor set of $[0,1]$ formed by the points $k=0,0 k_1 k_2\cdots k_j\cdots$ such that $k_i$ are even number in $\{0,\dots, 8\}$. The Hausdorff dimension of such a Cantor set is $d= \ln(5)/\ln(10)$. The diameter of $K$ is less than $1/10$. 

From fact \ref{lefait}, the set $\mathbb X$ of parameters $t$ such that $t+K\subset \tilde K_p$ is included in $[0,  \diam\, \tilde K_p-\diam\, K]$ and satisfies:
\[[0,  \diam\, \tilde K_p-\diam\, K] \setminus \mathbb X= \bigcup_{k\in K, i\ge N_p} \bigcup_{0\le q\le 10^i} \big\{\frac{q}{10^i} -k\big\} +(0,10^{-j}),\]
It is an union of intervals of same length. Let us observe that if the measure of this set is smaller than $\diam\, \tilde K_p-\diam\, K$, then $t+K\subset \tilde K_p$ for a Lebesgue positive set of parameters $t$.

Let $K_i$ be the subset of points in $K$ formed by numbers $k$ such that $10^i k$ has fractional part $0$. Note that 
\[[0,  \diam\, \tilde K_p-\diam\, K] \setminus \mathbb X\subset  \bigcup_{i} \bigcup_{0\le q< 10^i,\, k\in K_j } \{q/10^i -k\} +[- 10^{-j},10^{-j}],\quad \text{with}\; j= [i/p].\]

Let us bound from above the cardinality $C_i$ of $\{q/10^i -k;\; 0\le q< 10^i,\, k\in K_j \}$.
Every $k\in K_j$ is of the form $k=k_1+10^{-i}k_2$ with $k_1\in K_i$ and $k_2\in K_{j-i}$.  Also 
\[\{q/10^i -k; 0\le q\le 10^i,\, k_1\in K_i\}\subset 10^{-i}\cdot  \big\{-10^i+1,\dots ,0,\dots , 10^i-1\}\]
which is of cardinality $2 \cdot 10^i-1$. On the other hand,  $K_{j-i}$ is of cardinality $10^{d(j-i)}$. Hence:
\[C_i\le   2 \cdot 10^i\cdot 10^{d(j-i)}\]
Consequently:
\[\leb([0,  \diam\, \tilde K_p-\diam\, K]\setminus \mathbb X)\le \sum_{i\ge N_p} C_i 2\cdot  10^{-j}\le \sum_{i\ge N_p}  2\cdot  10^i\cdot 10^{d(j-i)} 2\cdot  10^{-j}= 4 \sum_{i\ge N_p}   10^{(d-1)([i/p]-i)}. \]

  Hence for  $p<1$, there exists $N_p$ such that $\leb([0,  \diam\, \tilde K_p-\diam\, K]\setminus \mathbb X)$ has measure smaller than 
  $\leb([0,  \diam\, \tilde K_p-\diam\, K])\ge 0,9$. Hence $\mathbb X$ has positive measure, and $ K+t$ is contained in $\tilde K_p$ for a Lebesgue positive set of parameters $t$.

\begin{rema} This example shows that Theorem \ref{theo1} is not optimal since we can choose $p$ close to 1 so that $p+d>1$. 
\end{rema}
\begin{rema}[Generalization of the counterexample] 
For the sake of simplicity, we chose $K$ with decimal expression formed only by even numbers, but we could have chosen any other subset $J$ of $\{0,...,9\}$, and have the same result. Observe that the Hausdorff dimension of $K$ is then $\ln(card\, J)/\ln(10)$. Moreover, we could have chosen any other basis than the decimal one. This leads  to the following observation. For any $d<1$, there exists a regular Cantor set $K$ of dimension greater than $d$, such that for any $p<1$, there exists a Cantor set $\tilde K_p$ which does not satisfy ($\mathcal C_p$) but which contains $K$ for a positive set of translation parameters.
\end{rema}

Nevertheless, this counterexample to the optimality of Theorem  \ref{theo1} does not seem to be stable by perturbation, as shown by the example in \textsection \ref{optimal}.

\subsubsection{Negative answer to Question \ref{quesopti}}\label{negarepo}

Let us come back to the example of \textsection \ref{A paradigmatic example of non optimality}.

 Let $\hat K\subset \tilde K_p$ be of Lebesgue positive measure.  Suppose that $\hat K$ satisfies condition $(C_{p'})$, for $p'<p$. 

Let $[0,1]\setminus \hat K=: \sqcup_k U_k$. For every $i\ge N_p$ and $k$, let $n_i(k)$ be the number of intervals of the form $U_{iq}$, among $q\in\{0,\dots 10^i-1\}$, contained in $U_k$.

We remark that if $n_i(k)=1$, then $\leb(U_k)\ge 10^{-i/p}$, and if  $n_i(k)\ge 2$, then $\leb(U_k)\ge n_i(k) 10^{-i}/2$.

Let us suppose for the sake of contradiction that:
\[\sum_k \leb(U_k)^{p'}\le M<\infty\]
This implies that for every $i$:
\begin{equation}\label{cardinalite}Card\{k: n_i(k)=1\}\le M 10^{ip'/p}\end{equation}
%As for every $i$, $\sum_k n_i(k) =10^i$, it comes that:

Let $I$ be an interval of density for $\hat K$, this means that $\leb(\hat K\cap I)> 3\leb(I)/4$.
%Then for every $U_k\subset I$, we have   $\leb(U_k)\le \leb(I)/4$.
% This implies:
%\[n_i(k)10^{-i}\le \leb(I)/2\Rightarrow \]
%\begin{equation}\label{ineq1}
%(n_i(k)10^{-i})^{p'-1}\ge \epsilon^{p'-1} \leb(I)^{p'-1}\end{equation}
On the other hand, for $i$ large, it holds $\sum_{k: U_k\subset I} n_i(k)10^{-i} \ge 3\leb(I)/4$ (since $(U_k)_k$ are $10^{-i}$-dense), and so by (\ref{cardinalite}):
\begin{equation}\label{ineq2}\sum_{k: U_k\subset I, n_i(k)\ge 2} n_i(k) 10^{-i}\ge 3\leb(I)/4- M 10^{i(\frac {p'}p-1)} \end{equation}
As $p'<p$, for $i$ large the above sum is greater than $\leb(I)/2$. Hence 
\begin{equation}\label{ineq2bis}\sum_{k: U_k\subset I, n_i(k)\ge 2} \leb(U_k)\ge \sum_{k: U_k\subset I, n_i(k)\ge 2} n_i(k) 10^{-i}/2\ge \leb(I)/4\end{equation}
A contradiction with $\leb(I) - \sum_{k: U_k\subset I} \leb(U_k)= \leb(I\cap \hat K)> 3\leb(I)/4$.
%Consequently, by (\ref{ineq1}):
%\[\sum_{k: U_k\subset I} \leb(U_k)^p\ge  \epsilon^{p'-1} \leb(I)^{p'-1}\sum_{k: U_k\subset I, n_i(k)\ge 2} n_i(k)10^{-i}\]
%And by (\ref{ineq2}):
%\[\sum_{k: U_k\subset I} \leb(U_k)^p\ge  \epsilon^{p'-1} \leb(I)^{p'-1}(\leb(I)(1-\epsilon)- M 10^{i(\frac{p'}p-1)})\]
%When $i$ is large, $M 10^{ip'/p}$ is small, and so we can assume it smaller than $\epsilon \leb(I)$. Consequently:
%\[\sum_{k: U_k\subset I} \leb(U_k)^p\ge  \epsilon^{p'-1} \leb(I)^{p'-1}\leb(I)(1-2\epsilon)\ge \epsilon ^{p'-1}(1-2\epsilon)\leb(I) \]
%Let us now chose a finite family $\mathcal I$  of disjoint intervals $I$, for which  $\leb(\tilde K\cap I)\ge (1-\epsilon)\leb(I)$ and such that their union is of Lebesgue measure greater than $\leb(\tilde K)-\epsilon$.
%
%It comes that:
%\[\sum_{k: U_k} \leb(U_k)^p\ge  \sum_{I\in \mathcal I}\sum_{k: U_k\subset I} \leb(U_k)^p\ge 
%\epsilon ^{p'-1}(1-2\epsilon)(\leb(\tilde K)-\epsilon).\]
%As $p'-1$ is negative and $\leb(\tilde K)$ positive, the above sum is large when  $\epsilon$ is small, and so this series is divergent.
% A contradiction.

\subsection{Natural example for which Proposition  \ref{coro1} is optimal}\label{optimal}

Let $K$ be a regular Cantor set of dimension $d$ (for instance we can continue with $K$ as above, made by numbers in $[0,1]$ whose decimal expression contains only even numbers. We recall that the Hausdorff dimension of $K$ is $d=\ln(5)/\ln(10)$).

For every $i$, let $u_{ik}\in [0,1], \; 0\le k\le 10^i-1$ be uniformly and independently randomly chosen. For ever $p\in(0,1)$, with $j= [i/p]$, we define:
\[\tilde K_p := [0,1]\setminus \left(\bigcup_{i\ge i_p} \bigcup_{0\le k\le 10^i-1} u_{ik} +[0,10^{-j}]\right),\]
where $i_p$ is sufficiently large so that $\tilde K_p$ has positive Lebesgue measure.

\begin{Claim}
If $p+d\ge 1$, the set $\{t:t+K\subset \tilde K_p\}$ has zero Lebesgue measure almost surely.
\end{Claim}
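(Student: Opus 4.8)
The plan is to show that the expected Lebesgue measure of $\mathbb X:=\{t:\; t+K\subset \tilde K_p\}$ vanishes; since $\leb(\mathbb X)\ge 0$, this forces $\leb(\mathbb X)=0$ almost surely. By Tonelli's theorem applied to the jointly measurable indicator of the event $\{t+K\subset \tilde K_p\}$,
\[\mathbb E[\leb(\mathbb X)]=\int_{\R}\mathbb P\big(t+K\subset \tilde K_p\big)\,dt,\]
so it suffices to prove that $\mathbb P(t+K\subset \tilde K_p)=0$ for almost every fixed $t$. As in Fact \ref{lefait}, $\mathbb X$ is contained in the bounded interval $[0,\diam\,\tilde K_p-\diam\,K]$, so the integral runs over a bounded range.

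First I would fix $t$ and rewrite the event. The inclusion $t+K\subset\tilde K_p$ holds if and only if none of the removed intervals $u_{ik}+[0,10^{-j}]$ meets $t+K$, i.e. $u_{ik}\notin t+K+[-10^{-j},0]$ for every $i\ge i_p$ and every $0\le k\le 10^i-1$. Writing $m_i:=\leb(K+[-10^{-j},0])$ for the measure of the one-sided $10^{-j}$-neighborhood of $K$ and using that the $u_{ik}$ are independent and uniform on $[0,1]$ (so that, for $t$ in the interior of the admissible range where the forbidden set stays inside $[0,1]$, each $u_{ik}$ avoids the forbidden set with probability $1-m_i$ by translation invariance), independence across all pairs $(i,k)$ gives the product formula
\[\mathbb P\big(t+K\subset\tilde K_p\big)=\prod_{i\ge i_p}(1-m_i)^{10^i}.\]

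The heart of the argument is the estimate $m_i\asymp 10^{-j(1-d)}$ with $j=[i/p]$. The upper bound $m_i\le 2C_K (10^{-j})^{1-d}$ is exactly Proposition \ref{propo1}. For the matching lower bound I would invoke the bounded geometry of the regular Cantor set $K$ (finiteness of $C_K$, Theorem \ref{PT}): at scale $\epsilon=10^{-j}$ the set $K$ has $\gtrsim\epsilon^{-d}$ gaps of size $\gtrsim\epsilon$, each contributing $\gtrsim\epsilon$ to the neighborhood, whence $m_i\ge c_K\,10^{-j(1-d)}$ for some $c_K>0$ (for the explicit even-digit example this is the direct computation $m_i\asymp 2^{-j}$). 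Plugging in $j=[i/p]$ and using $1-d\le p$ (equivalently $p+d\ge 1$) yields $i-j(1-d)\ge 0$, hence $10^i m_i\ge c_K\,10^{\,i-j(1-d)}\ge c_K>0$ for all large $i$, the quantity even tending to $\infty$ when $p+d>1$. In either case $\sum_i 10^i m_i=\infty$, and the elementary inequality $(1-x)^n\le e^{-nx}$ gives
\[\prod_{i\ge i_p}(1-m_i)^{10^i}\le \exp\Big(-\sum_{i\ge i_p}10^i m_i\Big)=0.\]

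Finally I would handle boundary effects: the lower bound on $m_i$ requires $t+K+[-10^{-j},0]\subset[0,1]$, which holds once $t$ lies in $[\delta,\,1-\diam\,K-\delta]$ and $i$ is large. This shows $\mathbb P(t+K\subset\tilde K_p)=0$ for every such $t$, hence $\mathbb E\big[\leb(\mathbb X\cap[\delta,1-\diam\,K-\delta])\big]=0$; letting $\delta=1/m\to 0$ through a countable sequence and taking the union gives $\leb(\mathbb X)=0$ almost surely. The main obstacle is the lower bound $m_i\gtrsim 10^{-j(1-d)}$ on the neighborhood measure, which is precisely where the regularity (bounded geometry) of $K$ is essential; everything else is a short computation relying only on the independence of the $u_{ik}$.
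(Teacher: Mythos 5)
Your proposal is correct and follows essentially the same route as the paper: fix $t$, reduce the inclusion $t+K\subset\tilde K_p$ to each $u_{ik}$ avoiding the set $t+K+[-10^{-j},0]$, lower-bound the measure of that set by $c\,(10^{-j})^{1-d}$, use independence to bound the probability by $\exp\bigl(-c\,10^{i}10^{-(1-d)[i/p]}\bigr)$, observe that $p+d\ge 1$ makes the exponent bounded below so the product over $i$ vanishes, and finish with Fubini (your explicit treatment of boundary effects and of the product over all $(i,k)$ at once is a minor tidying of what the paper does implicitly). One correction to the key step: the matching lower bound $m_i\ge c_K\,10^{-j(1-d)}$ does \emph{not} follow from finiteness of $C_K$ (Theorem \ref{PT}), which is an upper bound on covering numbers and is equally satisfied by a single point; what is needed is a \emph{lower} bound on covering numbers at scale $10^{-j}$, which the paper gets from the positivity of the $d$-dimensional Hausdorff measure of the regular Cantor set $K$. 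Your sketch (``$\gtrsim\epsilon^{-d}$ pieces of size $\asymp\epsilon$, with bounded overlap of their $\epsilon$-neighborhoods'') is the right picture, but it should be anchored to $m_d(K)>0$, exactly as in the paper's proof.
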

\begin{proof} 
For a fixed $t\in \mathbb R$ and $i\ge i_p$, we look the probability of the following event:
\[\{(t+K)\cap \{u_{ik}+[0,10^{-j}]\}\not= \varnothing \Leftrightarrow  t\in ([0,10^{-j}]-K)
+u_{ik}.\]
As for Proposition \ref{propo1}, and using the fact that the Hausdorff measure of dimension $d$ of $K$ is positive, there exists $c>0$ such that $\leb([0,10^{-j}]-K)>c (10^{-j})^{1-d}$.  The probability of the above event is greater than $c (10^{-j})^{1-d}$. As the variable $u_{ik}$ are independent, it comes that:
 \[Prob\{(t+K)\subset [0,1]\setminus \cup_k U_{ik}\}= \prod_k Prob\{(t+K)\subset [0,1]\setminus U_{ik}\}\le (1-c10^{-(1-d)[i/p]})^{10^i}\]
\[\le exp(-c 10^i 10^{-(1-d)[i/p]})\] 
 As $d+p\ge 1$, the above probability is uniformly less than $exp(-c/10)$. Considering this probability for infinitely many $i$, it holds that for every $t$ the probability that $t+K\subset \tilde K_p$ is 0. By Fubini's theorem, with full probability, the Lebesgue measure of the set $\{t: t+K\subset \tilde K_p\}$ is 0.
  \end{proof}

\begin{rema} 
As a consequence, for every given sequence $(\lambda_n)_n$ of positive real numbers with $\lim \lambda_n=0$, with full probability, the sets $\{t: t+\lambda_n K\subset \tilde K_p\}$ have zero Lebesgue measure for all positive integer $n$.
\end{rema}

\begin{rema} 
In the case $p+d>1$, it is possible to prove that, with full probability, the set $\{t: t+K\subset \tilde K_p\}$ is empty. Indeed, for each fixed $i$, we have  
\[\{(t+K)\cap \{u_{ik}+[10^{-j}/4,3\cdot10^{-j}/4]\}\not= \varnothing \Leftrightarrow  t\in ([10^{-j}/4,3\cdot10^{-j}/4]-K)
+u_{ik},\]
and the probability of the above event is greater than $c (10^{-j}/2)^{1-d}\ge c (10^{-j})^{1-d}/2$. So we have
 \[Prob\big\{(t+K)\subset [0,1]\setminus \bigcup_k \big(u_{ik}+[\frac{10^{-j}}4,\frac{3\cdot10^{-j}}4]\big)\big\}= \prod_k Prob\big\{(t+K)\subset [0,1]\setminus \big(u_{ik}+\big[\frac{10^{-j}}4,\frac{3\cdot10^{-j}}4\big]\big)\big\} \]
\[\le (1-c 10^{-(1-d)[i/p]} /2)^{10^i}\le\exp(-c 10^i 10^{-(1-d)[i/p]}/2)<\exp(-c\cdot10^{(d+p-1)i/p}/20).\] 
For each positive  integer $ r \le 2\cdot 10^j-1$, let $t_r=\frac{2r+1}{4\cdot 10^j}$. The probability that for one such integer $ r$, we have $(t_r+K)\subset [0,1]\setminus \cup_k (u_{ik}+[\frac{10^{-j}}4,\frac34 10^{-j}])$ is at most $2\cdot 10^j \exp(-c\cdot10^{(d+p-1)i/p}/20)\le 2\cdot 10^{i/p} \exp(-c\cdot10^{(d+p-1)i/p}/20)$, which tends (superexponentially fast) to $0$ when $i$ grows. So, with full probability, for $i$ large, and for each $r$ integer with $0\le r \le 2\cdot 10^j-1$, there is $k<10^i$ such that $(t_r+K)\cap (u_{ik}+[10^{-j}/4,3\cdot10^{-j}/4]) \not= \varnothing$, but if $x\in K$ and $(t_r+x)\in (u_{ik}+[\frac{10^{-j}}4,\frac 34 10^{-j}])$ then $([\frac r{2\cdot 10^j}, \frac {r+1}{2\cdot 10^j}]+x)=([t_r-\frac 1{4\cdot 10^j}, t_r+\frac 1{4\cdot 10^j}]+x)\subset u_{ik} +[0,10^{-j}]$, and so, for every $t \in [\frac r{2\cdot 10^j}, \frac {r+1}{2\cdot 10^j}]$, we have $(t+K)\cap [0,1]\setminus \cup_k U_{ik}\not= \varnothing$, and so $(t+K)\not \subset \tilde K_p$. So, for every $t\in [0,1]=\cup_{r=1}^{2\cdot 10^j-1}[\frac r{2\cdot 10^j}, \frac {r+1}{2\cdot 10^j}]$, $(t+K)\not \subset \tilde K_p$.
\end{rema}

\section{Toys  model for phase and parameter spaces of non-uniformly hyperbolic attractors}
\label{Explanation of the toy  model of the parameter selection for non-uniformly hyperbolic attractors}

\subsection{Toys  model for phase and parameter spaces of non-uniformly hyperbolic quadratic maps}
We recall that  if  a quadratic map $f(x)=x^2+a$  has an absolutely continuous invariant measure (acim) with Lyapunov exponent $\lambda>0$, then for  $C>0 $ small enough, $\lambda' \in (0,\lambda)$, the set $\tilde K$ of points $x$ such that:
\begin{equation}\tag{$\star$} 
 \|D_xf^n\| \ge  {C\lambda'^n},\quad \forall n\ge 0.
\end{equation}
is a Cantor set of positive Lebesgue measure, called Pesin set. The Collet-Eckmann property is that the critical value belongs to such a $\tilde K$ (for a certain  $C,\lambda'$).

%\subsection{Toy models for the geometries of phase and parameter spaces of Collet-Eckmann quadratic maps (a la fin???)}

The simplest example of non uniformly hyperbolic map is  the Chebichev map $x\mapsto  x^2-2$. It is semi conjugated to the doubling angle maps. 
%Hence a first toy model of the set of points satisfying $(\star)$ (for a certain $C$ and $\lambda'$) with positive Lyapunov exponent is the following Cantor set $\tilde K(s)$ with $s<1/2$.
 The first return map into $[-1,1]$ of the Chebichev map $x\mapsto  x^2-2$ is smoothly conjugated to the following map:
\[f\colon x\in [-1,1]\setminus \{0\} \mapsto \left\{\begin{array}{cl} 
2^{n+2} (x-2^{-n-1})-1,& \text{if }x\in (2^{-n-1}, 2^{-n}]\\
2^{n+2} (-x-2^{-n-1})-1,& \text{if }-x\in(2^{-n-1}, 2^{-n}]\end{array}\right.
\]
Put $I_{n+2}:= (2^{-n-1}, 2^{-n}]$ and $I_{-n-2}:= [-2^{-n}, -2^{-n-1} )$. Both intervals $int\, I_{\pm (n+2)}$ are sent diffeomorphically onto $(-1,1)$ by $f$. They are expanded by a factor $2^{n+2}$.  

Let $\mathcal D:= \cap_{k\ge 0}f^{-k}([-1,1])$ which is equal to $[-1,1]$ but a countable set.

Every $x\in \mathcal D$ has its image by $f^k$ into a certain $I_{x_k}$, with $x_k\in \mathbb Z\setminus \{-1,0,1\}$.

\paragraph{Geometry of Pesin sets for the Chebichev map}
We recall that a set of points which satisfies $(\star)$ with uniform $\lambda'$ and $C$ is called a \emph{Pesin set}.

Using the bijection map $x\in \mathcal D\mapsto (x_k)_{k\ge 1}$, we define the following family of Cantor sets:
\[\tilde K_{2}(s,N):= cl(\{x\in \mathcal D: |x_i|\le \max(N,s \sum_{j< i} |x_j|),\quad \forall i\}).\]

The conjugacy with the first return Chebichev map preserves $0$ and is smooth, hence the derivative of the Chebichev map at the points corresponding to those in $I_n$ is of the order of $2^{-n}$.

For every $s\in(0,1)$ and $N$, the set of points corresponding for the Chebichev map to $\tilde K_{2}(s,N)$ is included in a Pesin set. 
On the other hand, every Pesin set of the Chebichev map corresponds to a set included in $\tilde K_{2}(s,N)$, for some $s\in (0,1), N\ge 1$.

%In particular, the set of points with positive Lyapunov exponent for the Chebichev map is diffeomorphic to:
%\[\bigcup_{s<1, N\ge 0} \tilde K_{2}(s,N)\]

\begin{Claim} \label{exemchebichev}
The best lower bound of conditions ($\mathcal C_p$) satisfied by $\tilde K_{2}(s,N)$ is:
\[P(\tilde K_{2}(s,N))= \frac1{1+s}.\]
\end{Claim}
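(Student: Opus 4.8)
The plan is to reduce the computation of $P(\tilde K_2(s,N))$ to counting the gaps of $\tilde K_2(s,N)$ by their size, and then to a purely combinatorial growth estimate.

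\textbf{Step 1: description of the gaps.} Recall $|I_m|=2^{1-|m|}$ and that $f$ maps each $I_m$ onto $(-1,1)$ with slope $\pm2^{|m|}$, so for an allowed prefix $w=(m_1,\dots,m_{i-1})$ the cylinder $[w]$ has length $2^{1-S(w)}$, where $S(w):=\sum_{j<i}|m_j|$. Put $B(w):=\max(N,sS(w))$. Passing from level $i-1$ to $i$ we keep the subsymbols with $|m_i|\le B(w)$; since the intervals $I_m$ tile $(-1,1)$ and adjacent ones share an endpoint lying in the Cantor set, the only component removed is the central interval about $0$, of length $2^{2-\lfloor B(w)\rfloor}$ inside $(-1,1)$. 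Pulling back by the slope $2^{S(w)}$, I would record that the gaps of $\tilde K_2(s,N)$ are in bijection with allowed prefixes $w$, the gap of $w$ having length
\[\ell(w)=2^{\,2-\lfloor B(w)\rfloor-S(w)}.\]
In particular $\tau(w):=-\log_2\ell(w)=\lfloor B(w)\rfloor+S(w)-2$ depends only on $\sigma:=S(w)$, and $\sigma\mapsto\tau$ is strictly increasing with $\tau\sim(1+s)\sigma$ as $\sigma\to\infty$.

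\textbf{Step 2: reduction to a counting rate.} Writing $A(\sigma)$ for the number of allowed prefixes of weight exactly $\sigma$, and $n(t)$ for the number of gaps with $\tau=t$, the strict monotonicity of $\tau$ in $\sigma$ gives $n(\tau(\sigma))=A(\sigma)$. Using the standard fact that $\sum_w\ell(w)^p=\sum_t n(t)2^{-pt}$ converges iff $p>\limsup_t\frac{\log_2 n(t)}{t}$, I would conclude
\[P(\tilde K_2(s,N))=\limsup_t\frac{\log_2 n(t)}{t}=\limsup_\sigma\frac{\log_2 A(\sigma)}{\tau(\sigma)}=\frac{1}{1+s}\,\limsup_\sigma\frac{\log_2 A(\sigma)}{\sigma}.\]
Thus the whole claim reduces to proving $\lim_\sigma\frac{\log_2 A(\sigma)}{\sigma}=1$.

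\textbf{Step 3: the counting estimate.} Each allowed prefix is a sequence of parts $a_i=|m_i|\ge2$ carrying a sign, with the constraint $a_i\le B=\max(N,s\,\sigma_{i-1})$, where $\sigma_{i-1}$ is the running weight. For the upper bound I would simply drop the constraint: $A(\sigma)$ is then at most the number of sign-weighted compositions of $\sigma$ into parts $\ge2$, whose generating function $\frac{1-x}{1-x-2x^2}$ has smallest singularity at $x=\tfrac12$, giving $A(\sigma)\le C\,2^\sigma$, hence $\limsup\le1$. For the lower bound, fix a large $L$; first reach weight $\ge L/s$ using parts equal to $2$ (a bounded number of steps, always allowed since $N\ge2$), after which every part in $[2,L]$ satisfies $a_i\le L\le s\,\sigma_{i-1}$ and so is admissible for the rest of the word. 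The sign-weighted compositions into parts in $[2,L]$ have generating function $\big(1-\sum_{a=2}^L 2x^a\big)^{-1}$ with dominant root $x_L^\ast\to\tfrac12$ as $L\to\infty$, so they grow at rate $-\log_2 x_L^\ast\to1$. This forces $\liminf_\sigma\frac{\log_2 A(\sigma)}{\sigma}\ge1$, and combined with the upper bound yields the limit $1$; note the answer is independent of $N$, as only finitely many small gaps lie in the regime $B=N$.

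\textbf{Main obstacle.} The delicate point is the lower bound in Step~3: the multiplicative cap $a_i\le s\,\sigma_{i-1}$ genuinely forbids large parts while the weight is small, and a naive restriction to any fixed bounded alphabet only yields a rate strictly below $1$. The device of a constant ``warm-up'' followed by arbitrarily long blocks with parts in $[2,L]$, together with the limit $L\to\infty$, is what recovers the full rate $1$; I would take care that the warm-up contributes only a multiplicative constant (depending on $L$) that is invisible at the exponential scale. The gap-identification in Step~1 is routine bookkeeping once one checks that adjacent surviving intervals $I_m,I_{m+1}$ meet at a common Cantor point, so that no spurious gaps appear between kept symbols.
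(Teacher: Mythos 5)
Your proof is correct, and it takes a genuinely different route from the paper's on the crucial half of the statement. Both arguments start from the same gap identification (one central gap per admissible cylinder $\tilde I$, of length comparable to $|\tilde I|^{1+s}$), and your upper bound is essentially the paper's in combinatorial clothing: you bound the number of admissible prefixes of weight $\sigma$ by unconstrained signed compositions into parts $\ge 2$ (rate $2^{\sigma}$ via the generating function $\frac{1-x}{1-x-2x^2}$), while the paper gets the same $2^{k+1}$ bound geometrically, from the fact that cylinders of length $2^{-k}$ are pairwise disjoint in $[-1,1]$. The real divergence is the lower bound $P\ge \frac1{1+s}$. The paper does not count admissible prefixes at all: it uses that $\leb(\tilde K_2(s,N))>0$, observing that for every $t>0$ the cylinders of sufficiently large depth cover $\tilde K_2(s,N)$, so the sum of their lengths is at least $\leb(\tilde K_2(s,N))$; since each such cylinder contributes a gap of size comparable to its length to the power $1+s$, the sum of $\ell^{1/(1+s)}$ over gaps of size less than $t$ is bounded below by a positive constant independent of $t$, hence the series $\sum_{\text{gaps}}\ell^{1/(1+s)}$ has non-vanishing tails and diverges. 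Your argument instead manufactures exponentially many admissible prefixes by hand (warm-up with parts equal to $2$ until the cap $s\sigma_{i-1}$ exceeds $L$, then free parts in $[2,L]$, then $L\to\infty$), which is longer and requires the generating-function/Fekete step, but is self-contained: it never invokes the positivity of $\leb(\tilde K_2(s,N))$, a fact the paper asserts without proof. In short, the paper's proof buys brevity and makes transparent why the exponent $1/(1+s)$ is forced by a covering of positive measure; yours buys logical independence from that measure estimate and yields the sharper combinatorial information that the admissible words of weight $\sigma$ grow at the exact rate $2^{\sigma(1+o(1))}$, with the $N$-dependence visibly harmless. (One cosmetic point: the "converges iff $p>\limsup$" in your Step 2 can fail at the critical exponent itself, but since $P$ is defined as an infimum this does not affect the conclusion.)
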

\begin{proof}
 Since $|f'|_{I_k} \equiv 2^{-|k|}$, $\cap_{j=0}^{i-1}f^{-j}(I_{x_j})$ is an interval $\tilde I$ of size $2^{1-S}$, where $S=\sum_{0 \le j <i}|x_j|$, and, for $i$ large, the condition $|x_i|> \max(N,s \sum_{j< i} |x_j|)$ is equivalent to $|x_i|>s \sum_{j< i} |x_j|=sS$, and so corresponds to a gap in the middle of $\tilde I$ and with proportion in $\tilde I$ equivalent to $2^{-s \sum_{j< i} |x_j|}=2^{-sS}$, so with size equivalent to $|\tilde I|^{1+s}$. These are (except for the small values of $i$) the connected components of the complement of $\tilde K_{2}(s,N)$.

For every $t>0$, for every $i\ge 1-\log_2 (t)$, any interval $\tilde I_i$ of the form $\tilde I_i=\cap_{j=0}^{i-1}f^{-j}(I_{x_j})$ has length less than $t$. Suppose that $|x_j|> \max(N,s \sum_{k< j} |x_k|)$ for every $j<i$ and put $S_i:= \sum_{j=0}^{i-1} |x_j|$. Note that $\tilde K_2 (s,N)$ is covered by such intervals $\tilde I_i$.  The sum of such $|\tilde I_j|$ is at least equal to $\leb(\tilde K_2(s,N))$.
Hence the series of their gaps at the power $1/(1+s)$ is equivalent to at least $\leb(\tilde K_2(s,N))$.

 We notice that the Lebesgue measure of $\tilde K_2(s,N)$ is positive. As this series counts only the gap smaller than $t$, the series of the power $1/(1+s)$ of the gap sizes diverges. Consequently, $P(\tilde K_2(s,N))\ge 1/(1+s)$.

On the other hand, the cardinality of such $\tilde I_j$ of size $2^{-k}$ among $j\ge 1$ is at most $2^{k+1}$ (such  $\tilde I_j$ are disjoint since they are all associated to the same return time $k$). Hence the series of the at the power $p$ is smaller than $\sum_k 2^{k+1} 2^{-(1+s)k p }$ which converges if $p>1/(1+s)$. Consequently, $P(\tilde K_2(s,N))\le 1/(1+s)$. \end{proof}

 Hence the union of the Pesin sets is equal to an increasing countable  union of Cantor sets whose $P$ approach 1/2.  One can easily show that the union of Pesin sets of a Chebichev map of higher degree is equal also to an increasing countable union of Cantor sets whose $P$ approach 1/2.  
\begin{ques} Given any Collet-Eckmann map, does the union of the Pesin sets is equal to an  increasing countable union of Cantor sets whose $P$ approach 1/2?\end{ques}
 
\paragraph{Toy model for 	a positive subset of Collet-Eckmann parameters}

From Jacobson' Theorem, the set of Collet-Eckmann parameters is of positive Lebesgue measure \cite{BC1, Yoc}. A way to prove it is to construct inductively such 'good' intervals $I_n$. The induction start with a bunch of such intervals $(I_n)_{|n|\le M}$ (given by hyperbolic continuity). % and we suppose that the critical point returns first in such intervals.
 Then, we suppose that the first iterations of critical orbit went through the inductively constructed interval $I_n$, and this implies implies the existence of new intervals. This gives more and more possibilities for the post critical orbit. Every time that the critical orbit goes to an interval of last generation (ie $I_n$, $|n|>M$),  hyperbolicity is lost in the construction of the new intervals. To guaranty the Collet-Eckmann condition,  the critical point should not pass,  in mean,  by too much intervals $I_n$, for $|n|>M$. This gives rise to the notion of strong regularity \cite{Yoc}. This gives rise to the notion of strong regularity \cite{Yoc}. As far as the post critical orbit is hyperbolic, there are good distortion bounds for the transfer phase-parameter spaces, using for instance the puzzle-parapuzzle dictionary. 
%
%
% At the neighborhood of Collet-Eckmann quadratic maps, there are good distortion bounds for the transfer phase-parameter  spaces, using for instance the puzzle-parapuzzle dictionary. This implies that the set of parameters $a$ which satisfy the Collet-Eckmann condition contains a subset which has roughly speaking the same geometry as $\tilde K_2(s,N)$, $s\in (0,1)$. The set of Collet-Eckmann parameters is in particular of positive measure \cite{BC1, Yoc}. However, the intervals $I_n$, for $n$ large, are constructed inductively from the intervals in which the critical points went through. In such operations the hyperbolicity is lost, and so the critical point should not pass,  in mean,  by too much intervals $I_n$, for $n$ large. This gives rise to the notion of strong regularity \cite{Yoc}.

 Using the above coding of $\mathcal D$,  for $M\ge 3$ and $\delta \in (0,1/2)$, let:
\[\tilde K_3(M,\delta):= \big\{x\in [-1,1] \; \mathrm{s.t.}\; \forall k\ge 0,\quad  \sum_{j\le k:\; |x_j|>M} |x_j|\le 
 \delta\sum_{j\le k} |x_j|\big\}\]
For $\delta= 2^{-\sqrt{M}}$, this set corresponds, in the dictionary puzzle-parapuzzle, to the set of strongly regular quadratic maps \cite{Yoc}. These quadratic maps satisfy the Collet-Eckman condition. 
\begin{Claim} For $M\ge 3$ it holds:
\[ 1-\delta\le P(\tilde K_3(M,\delta))\le 1-\delta+ \frac{\delta}{M} \log_2 \frac{e^2 M^2}\delta.\]
\end{Claim}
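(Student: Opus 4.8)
Here is the route I would take. The whole statement is really about the connected components of $[0,1]\setminus\tilde K_3(M,\delta)$, so first I would describe them through the coding $x\mapsto(x_j)_j$ already used for Claim \ref{exemchebichev}. Call a finite word $\underline x=(x_0,\dots,x_{k-1})$ \emph{admissible} if the defining inequality of $\tilde K_3(M,\delta)$ holds at every level $\le k-1$; to it corresponds the cylinder $\tilde I=\cap_{j=0}^{k-1}f^{-j}(I_{x_j})$ of length $2^{1-S}$, where $S:=\sum_{j<k}|x_j|$. Write $B:=\sum_{j<k,\,|x_j|>M}|x_j|$ for its large part (so $B\le\delta S$) and $A:=S-B$ for its small part. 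A point of $\tilde I$ leaves $\tilde K_3$ at the next step exactly when its next symbol satisfies $|x_k|>T$, with
\[T:=\max\Big(M,\ \tfrac{\delta S-B}{1-\delta}\Big),\]
since $|x_k|\le M$ never breaks the inequality while $|x_k|>M$ breaks it iff $|x_k|>\tfrac{\delta S-B}{1-\delta}$. These removed symbols accumulate at an interior point of $\tilde I$, so they fill a single subinterval $G(\tilde I)\subset\tilde I$ whose length is, up to a bounded factor, $|\tilde I|\sum_{m>T}2\cdot2^{-m}=2^{2-S-T}$. Because the all-small continuation $2,2,2,\dots$ is admissible, the endpoints of every admissible cylinder lie in $\tilde K_3$, so the intervals $G(\tilde I)$ are exactly the components of the complement; thus $P(\tilde K_3)$ is governed by the convergence of $\sum_{\underline x}G(\tilde I)^p$.

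Everything then rests on two elementary estimates of $S+T$. Using $B\ge0$ and $\max(a,b)\le a+b$ for $a,b\ge0$,
\[\frac{A}{1-\delta}=\frac{S-B}{1-\delta}\ \le\ S+T\ \le\ \frac{S}{1-\delta}+M,\]
whence $2^{2-S/(1-\delta)-M}\lesssim G(\tilde I)\lesssim 2^{2-A/(1-\delta)}$. For the lower bound I would use only the left inequality: it gives $G(\tilde I)^{1-\delta}\ge c_M\,|\tilde I|$ with $c_M:=2^{(1-\delta)(2-M)-1}>0$ a fixed constant. Summing over all admissible words of all lengths and recognising $\sum_{|\underline x|=k}|\tilde I|=\leb(\tilde K_3^{(k)})$ (the $k$-th cover, which contains $\tilde K_3$),
\[\sum_{\underline x}G(\tilde I)^{1-\delta}\ \ge\ c_M\sum_{\underline x}|\tilde I|\ =\ c_M\sum_k\leb(\tilde K_3^{(k)})\ \ge\ c_M\sum_k\leb(\tilde K_3)\ =\ +\infty,\]
since $\leb(\tilde K_3)>0$. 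So $(\mathcal C_{1-\delta})$ fails and $P(\tilde K_3)\ge1-\delta$; notice this half needs no combinatorics, only positivity of the measure.

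For the upper bound I would use the right inequality, so that with $q:=p/(1-\delta)$,
\[\sum_{\underline x}G(\tilde I)^p\ \le\ 2^{2p}\sum_{\underline x}2^{-qA(\underline x)}\ =\ 2^{2p}\sum_{A\ge0}\mathcal N(A)\,2^{-qA},\]
where $\mathcal N(A)$ counts admissible words with small part exactly $A$; the series converges as soon as $q>h:=\limsup_A\frac1A\log_2\mathcal N(A)$, i.e. as soon as $p>(1-\delta)h$. To estimate $\mathcal N(A)$ I would factor a word into its small symbols — a signed composition of $A$ into parts in $\{2,\dots,M\}$, whose number grows like $\beta_M^A$ where $\sum_{m=2}^M2\beta_M^{-m}=1$, and $\log_2\beta_M\le1$ because the sum is $1-2^{1-M}<1$ at $\beta=2$ — and its large symbols, which contribute nothing to $A$ but (relaxing the level-by-level constraint to the final one $B\le\tfrac{\delta A}{1-\delta}$, legitimate for an upper bound) number at most $R:=\lfloor\tfrac{\delta A}{M(1-\delta)}\rfloor$. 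Bounding the $\binom{k_s+r}{r}\le\binom{A}{r}$ interleavings, the $2^r$ signs and the large values by compositions of $B\le\tfrac{\delta A}{1-\delta}$ into $r$ parts $>M$, the large-symbol contribution to $\frac1A\log_2\mathcal N(A)$ is at most $\max_{r\le R}\tfrac rA\log_2\big(\tfrac{c\,\delta A^2}{(1-\delta)r^2}\big)$; as $\delta$ is small this is increasing in $r$ and maximised at $r=R$, yielding the announced per-unit cost $\tfrac{\delta}{M(1-\delta)}\log_2\tfrac{e^2M^2}{\delta}$. Hence $h\le1+\tfrac{\delta}{M(1-\delta)}\log_2\tfrac{e^2M^2}{\delta}$ and $P(\tilde K_3)\le(1-\delta)h=1-\delta+\tfrac{\delta}{M}\log_2\tfrac{e^2M^2}{\delta}$.

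The component description and the two bounds on $S+T$ are routine; the genuine work — and the only place where the precise constant $e^2M^2/\delta$ is produced rather than a worse one — is the entropy count $\mathcal N(A)$ in the last paragraph. There one must handle the interleaving of large and small symbols, track that the small-symbol factor contributes $\log_2\beta_M\le1$ while the large symbols add only the small $\tfrac{\delta}{M(1-\delta)}\log_2\tfrac{e^2M^2}{\delta}$, and carry out the binomial optimisation in $r$ with Stirling-type estimates sharp enough to land on $e^2M^2/\delta$. I expect this combinatorial optimisation to be the main obstacle.
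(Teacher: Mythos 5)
Your overall architecture is the same as the paper's: code points by symbol sequences, identify each hole of $\tilde K_3(M,\delta)$ with the middle gap of an admissible cylinder, of size roughly $2^{-S-T}$ with $T=\max\big(M,\frac{\delta S-B}{1-\delta}\big)$, and reduce both bounds to counting words against an exponential weight. Your gap description and the two bounds on $S+T$ are correct. But there are two genuine gaps, and they sit exactly where the content of the claim lies. First, the upper bound: you defer the combinatorial count (calling it ``the main obstacle''), and the sketch you do give does not produce the claimed constant. With your ingredients the large-symbol factor is $\binom{A}{r}\,2^r\,\binom{B}{r}\le (eA/r)^r\,2^r\,(eB/r)^r=(2e^2AB/r^2)^r$, i.e.\ $c=2e^2$ in your notation; plugging $r=R=\frac{\delta A}{M(1-\delta)}$ and $B\le\frac{\delta A}{1-\delta}$ gives per-unit cost $\frac{\delta}{M(1-\delta)}\log_2\frac{2e^2M^2(1-\delta)}{\delta}$, not the announced $\frac{\delta}{M(1-\delta)}\log_2\frac{e^2M^2}{\delta}$. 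Since $2(1-\delta)>1$ for every $\delta<1/2$, your route proves only a strictly weaker inequality. The loss is identifiable: the relaxation $\binom{k_s+r}{r}\le\binom{A}{r}$ throws away $k_s\le A/2$ (every small symbol has absolute value at least $2$), and that factor $\tfrac12$ is precisely what must cancel the $2^r$ coming from the signs. This cancellation is how the paper lands on $e^2M^2/\delta$: in its count of $\mathcal E(N,R,n,t)$ it uses $n\le N/2$ to get $\binom{n}{t}\le(eNM/2R)^{R/M}$, pairs it with $2^t\binom{R}{t}\le(2eM)^{R/M}$, obtaining $(e^2NM^2/R)^{R/M}\le(e^2M^2/\delta)^{R/M}$ times $2^{N-R}$ for the small symbols. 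Your framework is repairable (keep $A/2+r$ in the interleaving estimate and the $2^r$ cancels), but as written the proposal does not prove the stated bound, and the optimisation in $r$ is more delicate than the endpoint evaluation you perform (at $r=B/M$ the number of compositions of $B$ into parts $>M$ is actually $0$, so the true maximum is interior).

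Second, the lower bound: your argument is correct only conditionally on $\leb(\tilde K_3(M,\delta))>0$, which you assert without proof, and which in fact fails on part of the stated parameter range. Under Lebesgue measure the symbols are i.i.d.\ with $P(|x_j|=m)=2^{1-m}$, so almost surely $\frac1k\sum_{j\le k}|x_j|\to 3$ while $\frac1k\sum_{j\le k,\,|x_j|>M}|x_j|\to (M+2)2^{1-M}$; hence whenever $\delta<(M+2)2^{1-M}/3$ (for instance $M=3$ and any $\delta<5/12$) almost every point eventually violates the defining inequality and $\leb(\tilde K_3(M,\delta))=0$, so your divergent series argument is vacuous there. The paper's lower bound takes a different route that never invokes the measure of $\tilde K_3$: it observes the inclusion $\tilde K_3(M,\delta)\subset\tilde K_2(M,\delta/(1-\delta))$ and appeals to Claim \ref{exemchebichev}, where positivity of measure is available for $\tilde K_2$. (One may question the monotonicity of $P$ under inclusion that this step implicitly uses, but it is the paper's route; your replacement for it hinges on a measure-positivity statement that is missing from the proposal and not always true.)
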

%Let us bound $P(\tilde K_3(M,\delta))$.
% satisfies ($\mathcal C_p$). Let us bound from below the value of $p$.  
\begin{proof}
First we notice that $\tilde K_2(M, \delta/(1-\delta))$ contains $\tilde K_3(M,\delta)$, and so by Claim \ref{exemchebichev}:
\[P(\tilde K_3(M,\delta))\ge P(\tilde K_2(M, \delta/(1-\delta))=1-\delta.\] 

Let $\mathcal E(N,R,n,t)$ be the set: 
\[\big\{(a_i)_{i=1}^n\in (\Z\setminus \{-1,0,1\})^n :\; N=\sum_{j\le n} |a_j|,\; R=\sum_{j\le n:\; |a_j|>M} |a_j|,\; t=|\{j\le n:\; |a_j|>M\}|\big\}.\] 
Put $\mathcal E(N)=\cup_{R= [\delta N]+1,\;  n,\; t}\mathcal E(N,R,n,t)$. 
For every hole $H$ of  $\tilde K_3(M,\delta)$, there exist (exactly two) $(a_i)_{i=1}^{n}\in \mathcal E(N)$ such that 
$H$ is included the intervals $J_{(a_i)_{i=1}^{n-1}}:=\{x:\quad |x_1|=a_1, |x_2|=a_2, \cdots, |x_{n-1}|=a_{n-1}\}$, with $n$ maximal. Note that the size of $H$ is at most $2^{-N+3}$. 
%
% Note that $R\le \delta N$.  
%
%For every $N$, the set $\tilde K_3(M,\delta)$ is covered by the intervals $J_{(a_i)_{i=1}^n}:=\{x:\quad |x_1|=a_1, |x_2|=a_2, \cdots, |x_{n}|=a_{n}\}$, among $(a_i)_{i=1}^n\in \mathcal E(N)$ and $n\ge 1$.  These intervals have size $2^{-N+1}$. Moreover, every hole of $\tilde K_3(M,\delta)$ is in the center of a unique $J_{(a_i)_{i=1}^n}$, with $(a_i)_{i=1}^n\in \mathcal E(N)$ and $N\ge 0$.
 Hence:
\[\forall p\in(0,1),\; \sum_{N\ge 0} card(\mathcal E(N)) 2^{-pN}<\infty\Rightarrow P(\tilde K_3(M,\delta))\le p.\] 
Therefore the following implies that  $P(\tilde K_3(M,\delta))$ is at most $1-\delta +\frac \delta M \log_2(\frac{e^2 M^2}{\delta})$.
%Let us bound from above $card(\mathcal E(N))$.

\begin{lemm} The cardinality of $\mathcal E(N)$ is at most $N^3 2^{N-\delta N}(e^2 M^2/\delta)^{(\delta N+1)/M}$.
%For all $R\le \delta N$, the cardinality of $\mathcal E(N,R,n,t)$ is at most:
%\[2^N(e^2 M^2 2^{-M})^{\delta N/M}.\]
\end{lemm} 
\end{proof}
%---------------
%
%In order to give an upper bound for $P(\tilde K_3(M,\delta))$.
%
% The set $[-1,1]\setminus \tilde K_3(M,\delta)$ is the union of intervals of the form: 
%\[J_{a_1\cdots a_{n-1}}:=\{x:\quad |x_1|=a_1, |x_2|=a_2, \cdots, |x_{n-1}|=a_{n-1} ,\quad |x_n|>
%\max(M, 
%\delta \sum_{j\le n} |a_j| - \sum_{j< n:\; |a_j|>M} |a_j|) \}.\]
%%where $a_n\ge M$, $\sum_{j\le n:\; |a_j|>M} |a_j|> \delta \sum_{j\le n} |a_j|$. 
% Put $N=\sum_{j\le n} |a_j| \ge 2n$, $R=\sum_{j\le n:\; |a_j|>M} |a_j|>\delta N$ and $t=|\{j\le n:\; |a_j|>M\}|$.  
\begin{proof}  
 For $R$, $N$, $t$ and $n$ fixed, the number of possible choices of $E= \{j\le n:\; |a_j|>M\}$ is ${n \choose t}$.
\begin{lemm} ${n \choose t}\le (eNM/2R)^{R/M}$.\end{lemm}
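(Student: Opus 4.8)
The plan is to reduce the estimate to a standard bound on binomial coefficients and then to optimize over the free exponent by an elementary monotonicity argument. I would use three inputs. First, the classical inequality $\binom{n}{t}\le (en/t)^t$, which follows from $\binom{n}{t}\le n^t/t!$ together with $t!\ge (t/e)^t$. Second, since each $a_j$ lies in $\Z\setminus\{-1,0,1\}$ we have $|a_j|\ge 2$, so $N=\sum_{j\le n}|a_j|\ge 2n$, that is $n\le N/2$. Third, every index counted by $t$ satisfies $|a_j|>M$, so $R=\sum_{j\le n:\,|a_j|>M}|a_j|\ge tM$, which gives $t\le R/M$.

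Combining the first two inputs, I would write
\[\binom{n}{t}\le \Big(\frac{en}{t}\Big)^t\le \Big(\frac{eN}{2t}\Big)^t=:g(t).\]
The point is then to replace the exponent $t$ by its upper bound $R/M$, for which I need $g$ to be increasing in $t$ on the relevant range. The key computation is $\log g(t)=t\,\log(eN/2t)$, whence $\frac{d}{dt}\log g(t)=\log(N/2t)$, so that $g$ is non-decreasing exactly on the interval $(0,N/2]$. Since $M\ge 3$ and $R\le N$, we have $R/M\le N/2$, and likewise $t\le n\le N/2$; thus both $t$ and $R/M$ lie in the region where $g$ increases. Using $t\le R/M$ I would then conclude
\[g(t)\le g(R/M)=\Big(\frac{eN}{2(R/M)}\Big)^{R/M}=\Big(\frac{eNM}{2R}\Big)^{R/M},\]
which is the desired bound.

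The main obstacle, such as it is, lies precisely in this monotonicity step: one cannot simply substitute $t\le R/M$ into the exponent of $(eN/2t)^t$ without first checking that the function increases there, and this is exactly what forces the verification that $R/M\le N/2$, where the hypotheses $R\le N$ and $M\ge 3$ enter. Everything else is routine. I would also record the trivial boundary case $t=0$ (equivalently $R=0$), in which both sides equal $1$; in the situation of the lemma one has $R=[\delta N]+1\ge 1$, hence $t\ge 1$, and the argument above applies directly.
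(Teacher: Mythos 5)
Your proof is correct, and it takes a cleaner route than the paper's. The paper also starts from $t\le R/M$ and the standard bound $\binom{n}{k}\le (en/k)^k$, but it then splits into two cases according to whether $R/M<n/2$ or $R/M\ge n/2$: in the first case it invokes monotonicity of binomial coefficients to write $\binom{n}{t}\le\binom{n}{R/M}\le (enM/R)^{R/M}\le (eNM/2R)^{R/M}$ (glossing over the fact that $R/M$ need not be an integer), and in the second case it uses the crude bound $\binom{n}{t}\le 2^n\le (3e/2)^{n/2}\le (eM/2)^{R/M}\le (eNM/2R)^{R/M}$, where $M\ge 3$ and $R\le N$ enter. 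Your argument eliminates the case analysis entirely by transferring the monotonicity question from the discrete binomial coefficient to the smooth majorant $g(t)=(eN/2t)^t$, whose logarithmic derivative $\log(N/2t)$ shows it is non-decreasing precisely on $(0,N/2]$; since $t\le n\le N/2$ and $R/M\le N/3\le N/2$ both land in that interval, the substitution $t\mapsto R/M$ is legitimate in one stroke. What your approach buys is rigor at the point where the paper is sloppy (the non-integer ``$\binom{n}{R/M}$'') and a single unified estimate; what the paper's approach buys is that it stays purely discrete, never differentiating anything, at the cost of two cases and an ad hoc chain of inequalities in the second one. Your handling of the degenerate case $t=0$ (equivalently $R=0$), which the paper does not mention, is also a small but genuine improvement.
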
 
 \begin{proof}  We remark that   $t \le R/M$. There are two cases:\begin{itemize}
\item  If $R/M < n/2$ then ${n \choose t}\le{n \choose R/M} \le  (enM/R)^{R/M}\le (eNM/2R)^{R/M}$.
\item If $R/M\ge n/2$ then ${n\choose t}\le 2^n\le (3e/2)^{n/2}\le (eM/2)^{R/M} \le (eNM/2R)^{R/M},\text{ since }R\le N$.\end{itemize}
% \[R/M < n/2\Rightarrow {n \choose t}\le{n \choose R/M} \le  (enM/R)^{R/M}\le (eNM/2R)^{R/M}.\]
%%  If $ $, then
%\[R/M\ge n/2\Rightarrow {n\choose t}\le 2^n\le 4^{n/2} \le (3e/2)^{n/2}\le (eM/2)^{R/M} \le (eNM/2R)^{R/M},\text{ since }R\le N.\]
\end{proof}
% \[O({n \choose t})=O({n \choose R/M})=O((enM/R)^{R/M})=O((eNM/2R)^{R/M}),\]
For given  $R$, $t$ and $E:= \{j\le n:\; |a_j|>M\}$, the number of possible choices of $(a_j)_{j\in E}$ is bounded by $2^t$ (to choose the sign of $a_j$) times the number of solutions of $x_1+x_2+\cdots+x_t=R$ (to choose the absolute value), so it is $2^t{R \choose t}\le 2^t{R \choose R/M}\le (2eM)^{R/M}$.

Therefore,  given  $N$, $R$, $n$ and $t$, the number of possible choices of $\{a_j, j\le n:\; |a_j|>M\}$ is at most $(e^2N M^2/R )^{R/M}$.

The number of choices of $(a_j, j\le n:\; |a_j|\le M)$ is bounded by $2^{N-R}$. Indeed an induction shows that the number $C_k$ of finite sequences of integers in $\mathbb Z\setminus \{-1,0,1\}$ whose absolute value sum is equal to $ k\ge 0 $ is less than $2^{k}$, since $C_{k+1}= 2+\sum_{i=2}^{k-2} 2 C_{k-i}$.  
  
So, the cardinality of $\mathcal E(N,R,n,t)$ is at most 
%total number of intervals $J_{a_1\cdots a_{n-1}}$ for given $N$, $t$, $R$ is 
% \[O((e^2 M^2 N/R)^{R/M} \cdot 2^{N-R})=O(2^{N-R}(e^2 M^2/\delta)^{R/M})=O(2^{N-R}(e^2 M^2 2^{\sqrt{M}})^{R/M})\]
% \[=O(2^N(e^2 M^2 2^{\sqrt{M}-M})^{R/M})=O(2^N(e^2 M^2 2^{\sqrt{M}-M})^{2^{-\sqrt M}N/M}).\]
% 
%  Given $N$ there are at most $N^2$ choices for $(t,R)$, and so the total number of possibilities is $O(2^N \cdot N^2(e^2 M^2 2^{\sqrt{M}-M})^{2^{-\sqrt M}N/M})=O(2^N (2^{2\sqrt{M}-M})^{2^{-\sqrt M}N/M}=O(2^{N(1-(\frac2{\sqrt{M}}-1)2^{-\sqrt M})})$. Since the size of  $J_{a_1\cdots a_{n-1}}$ is $O(2^{-N})$, this implies that $P(\tilde K_3(M,\delta))\le 1-(\frac2{\sqrt{M}}-1)2^{-\sqrt M}$.
%
%--------------------------
 \[(e^2 M^2 N/R)^{R/M} \cdot 2^{N-R}\le 2^{N-R}(e^2 M^2/\delta)^{R/M}\le 2^N(e^2 M^2 2^{-M}/\delta)^{R/M}=2^N(e^2 M^2 2^{-M}/\delta)^{(\delta N+1)/M}.\]
Given $N$ there are at most $N^3$ choices for $(t,R,n)$ 
%As $n\le N$, there $N-R\le nM$ and $R\le \delta N$, there are at most 
 and so:
%  the total number of possibilities is 
\[Card\; \mathcal E(N)\le N^3 2^N(e^2 M^2 2^{-M}/\delta)^{(\delta N+1)/M}
=  N^3 2^{N(1-\delta)}(e^2 M^2/\delta)^{(\delta N+1)/M}.\]\end{proof}
% Since the size of  $J_{a_1\cdots a_{n-1}}$ is $O(2^{-N})$, this implies that:
% \[P(\tilde K_M)\le 1-\delta +\frac \delta M \log_2(\frac{e^2 M^2}{\delta})\]
%
%If $\delta\in(0,1/2)$ is fixed, then the term $\frac \delta M \log_2(\frac{e^2 M^2}{\delta})$ when $M$ is large, and so  $P(\tilde K_3(M,\delta))\le 1-\delta$ approach $1-\delta$.
%
%If  $\delta= 2^{-\sqrt M}$, then the term $\frac \delta M \log_2(\frac{e^2 M^2}{\delta})= \frac {2^{-\sqrt M}} M \log_2({e^2 M^2}2^{\sqrt M})\sim \frac {2^{-\sqrt M}} {\sqrt M} $ which is negligible  w.r.t. $2^{-\sqrt M}$ when $M$ is large. Consequently $P(\tilde K_3(M,2^{-\sqrt M}))$ is close to $1-2^{\sqrt M}$. 

\begin{rema} The above estimates do not seems to be affected by the Collet-Eckmann map we look at.\end{rema}

\subsection{Toy models for phase and parameter spaces of non-uniformly hyperbolic Hénon maps}
% At the neighborhood of such a map, there are good distortion bounds for the transfer phase-parameter  spaces, using for instance the puzzle-parapuzzle dictionary. This implies, roughly speaking that the set of parameters $a$ which satisfy the Collet-Eckmann condition contains a subset which has the same geometry as $\tilde K$. The set of Collet-Eckmann parameters is in particular of positive measure \cite{BC1, Yoc}. The support of the acim is also called a non-uniformly hyperbolic attractor.

In dimension 2, the  only known paradigmatic family of maps which has a non-uniformly hyperbolic attractor for a positive set of parameters (and this for generic perturbations) is the Hénon family. 
\[f_{ab}\colon(x,y)\mapsto(1-ax^2+y,-bx)\]
From Benedicks-Carleson Theorem \cite{BC1, BY, berhen}, for every fixed determinant $b$ (very) small, there exists a Lebesgue positive set of parameters $a\in \mathbb R$, such that $f_{ab}$ has  a non-uniformly hyperbolic attractor.
This means that  there exits an \emph{SRB probability} $\mu$ for $f_{ab}$: $\mu$ is invariant,  has a positive Lyapunov exponent, and the conditional measure of $\mu$ with respect to the unstable manifolds is absolutely continuous. 

The historical motivations were numerical experimentations by Hénon, wondering the existence of such a chaotic behavior for the parameter $a=1.4$ and $b=0.3$ of the Hénon family \cite{}. This problem is still open since the above works supposed $b$ much smaller than $0.3$. We will see at the end how the present work might help to solve this.

Let us give the flavor of the geometry of these attractors when $b$ is small. For such parameters $a$, there exists a uniformly hyperbolic horseshoe $H$, that  can  be fairly well approximated by the product of Cantor sets $K^u\times K^s$ where $K^u$ has Hausdorff dimension close to 1, $K^s$ has Hausdorff dimension close to 0, and the unstable direction is almost horizontal. The non-uniformly hyperbolic attractor is equal to closure of the unstable set of $H$.

Several constructions of the SRB measure \cite{BV,berhen} implies that restricted to any local unstable manifold $W^u_{loc}(z)\approxeq [-1,1]\times \{k\}$, $k\in K^s$, the set of points of $W^u_{loc}(z_0)$ which satisfies ($\star$) is a Cantor set $\tilde K$ of positive measure for any  $C>0 $ small enough, $\lambda'\in (0,\lambda)$. From the fact that the  Lyapunov exponent is large with respect to the determinant, every point $x$ of $\tilde K$ has a long local stable manifold $W^s_{loc}(x)$. Moreover the family of curves $(W^s_{loc}(x))_{x\in \tilde K}$ depends Lipschitz on $x$ in the meaning that there exists a Lipschitz homeomorphism which sends 
$\sqcup_{x\in \tilde K} W^s_{loc}(x)$ onto $\tilde K\times (-1,1)$. Moreover there exists $C'\in (0,C)$ such that $z\in \sqcup_{x\in \tilde K} W^s_{loc}(x)$ satisfies ($\star$) for $C'$ and $\lambda$.

%For any other local unstable manifold $W^u_{loc}(z')$, the intersection $W^u_{loc}(z')\cap \sqcup_{x\in \tilde K_z} W^s_{loc}(x)$ is made by points satisfying ($\star$) for $\lambda'$ and $C_1<C$ close to $C$, and contains all the points satisfying ($\star$) for  $\lambda'$ and $C_2>C$ close to $C$.  

%For this reason we can suppose $\tilde K=\tilde K_z$ independent of $z$.

The image by $f_a$ of each unstable manifold $W^u_{loc}(z)$ is a horizontal parabola. Similarly to the one dimensional case, the idea of the proof \cite{berhen} was (in particular) to make every image of $W^u_{loc} (H)$ by $f_a$ tangent to $\sqcup_{x\in \tilde K} W^s_{loc}(x)$. In other words, for every $z\in H$, there exists $x\in \tilde K$ such that 
$f(W^u_{loc} (z)) $ is tangent to $W^s_{loc} (x)$. In particular, there exists a diffeomorphism which sends $K^s$ into $\tilde K$.

\begin{figure}[h]
    \centering
        \includegraphics[width=10cm]{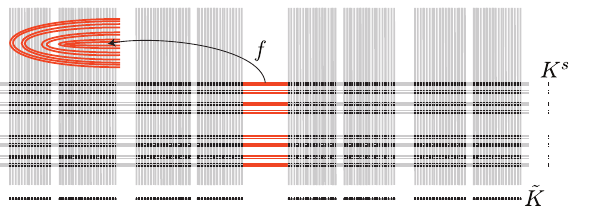}
    \caption{Toy model for parameter selection in the Hénon family.}
\label{EideltaS}
\end{figure}

When the parameter $a$ varies, the folding points have their $x$-coordinate which have nearly the same strictly monotone  dependence on $a$.

The parameter selection is handle in a same similar way as in dimension $1$, and roughly speaking, it is done by including in particular $K ^s$ into a similar Cantor set to $\tilde K$ for a Lebesgue positive set of translation parameters. 

\emph{It is a very rough description} :
Actually, the parameter selection is done during an induction which constructs an increasing sequence of unstable curves sets $([-1,1]\times K^s_n)_n$ and a decreasing sequence $(\tilde K_n)_n$ of Lebesgue positive subsets of $[-1,1]$, such that $K^s_0= K^s$ and $K^s_n$ is close to $K ^s$ and of Hausdorff dimension small, whereas $\tilde K=\cap_n \tilde K_n$.  
The construction of $\tilde K_n$ and $K^s$ depend of the $n$ first iterates $f^n(W^u_{loc}(H))$. Therefore, the geometries of $\tilde K_n$ and $K^s_n$ are constant only on smaller and smaller parameter intervals. These parameter intervals converge to single points.

\paragraph{Conclusion of the Toys model} 
From \cite{HZ}, the box dimension of the Hénon attractor for $a=1.4$ and $b=0.3$ is   $1.261\pm 0.003$. Hence the attractor should come from a uniformly hyperbolic horseshoe with stable transversal dimension close to $1$ and unstable transversal dimension close to $0.26$. On the other hand, the Cantor set $\tilde K({3,\delta})$ satisfies that $P(\tilde K({3,\delta})\approx 1-\delta$, and only $\delta\in (0,1/2)$ would be relevant to model the parameter space of non-uniformly hyperbolic maps. Hence, by Proposition \ref{coro1} and taking the notion of strong regularity with $\delta= 1/3>0.26$, this would be sufficient to nest the unstable transversal into the long stable foliation for a positive set of translation. Following the above toy model this would correspond to show Hénon conjecture\cite{He}.  

\bibliographystyle{amsalpha} \bibliography{biblio}

\end{document}